\numberwithin{equation}{section} 
\theoremstyle{plain}\newtheorem{thm}{Theorem}[section]
\theoremstyle{plain}\newtheorem{defn}[thm]{Definition}
\theoremstyle{plain}\newtheorem{lem}[thm]{Lemma}
\theoremstyle{plain}
\theoremstyle{plain}\newtheorem{prop}[thm]{Proposition}
\theoremstyle{remark}\newtheorem{rmk}[thm]{Remark}
\numberwithin{thm}{section}
\title{H\"older continuity of bounded, weak solutions of a variational system in the critical case}
\author{Nirav Shah}
\date{} 
\begin{document}
\maketitle

\begin{abstract}
\noindent Let $\Omega\subset\mathbb{R}^{2}$ be a bounded, Lipschitz domain. We consider bounded, weak solutions ($u\in W^{1, 2}\cap L^{\infty}(\Omega;\mathbb{R}^N)$) of the vector-valued, Euler-Lagrange system: 
\begin{equation}\label{eq: E}
\text{div } \big( A(x, u)Du\big)=g(x, u, Du)\quad\text{in }\Omega.
\end{equation}

\noindent Under natural growth conditions on the principal part and the inhomogeneity, but \emph{without} any further restriction on the growth of the inhomogeneity (for example, via a smallness condition), we use a blow-up argument to prove that every bounded, weak solution of~\eqref{eq: E} is H\"older continuous. Since the dimension of $\Omega$ is $2$ and $u\in W^{1, 2}(\Omega;\mathbb{R}^N)$, we are in the critical setting, and hence, cannot use the Sobolev embedding theorem to deduce H\"older continuity. 

\noindent Our results are connected to a particular case of the open problem of whether all solutions (and not just extremals) of variational systems are H\"older continuous in the critical setting.
\end{abstract}
\section{Introduction}
\label{intro}

Let $\Omega\subset\mathbb{R}^n, n\geq 2$ be a bounded, Lipschitz domain. Beck and Frehse~\cite{Frehse2013} considered elliptic systems of the type:
\begin{equation}\label{eq: bfeq}
\text{div }\left(a(x, u, Du)\right)=a_0(x, u, Du)\quad\text{in }\Omega\subset\mathbb{R}^n.
\end{equation}The principal part $a$ and the inhomogeneity $a_0$ are Carath\'eodory functions, and the system satisfies the natural growth conditions, that is, 
\begin{equation}\label{eq: ngc}
\left\{\begin{array}{c c}
|a(x, z, \zeta)|&\leq K(1+|\zeta|^{p-1})\\
|a_0(x, z, \zeta)|&\leq K_0(1+|\zeta|^p)
\end{array}\right.
\end{equation}for all $(x, z, \zeta)\in \Omega\times\mathbb{R}^N\times\mathbb{R}^{Nn}$, for some $K, K_0>0$ and some fixed $p>1$. We recall what it means to weakly solve~\eqref{eq: bfeq}.

\begin{defn}\label{wksoln}
A function $u\in W^{1, p}\cap L^{\infty}(\Omega;\mathbb{R}^N)$ is called a bounded, weak solution of~\eqref{eq: bfeq} if
\begin{equation}\label{eq: weakeq}
\int_{\Omega}a(x, u, Du)\cdot D\varphi\ \mathrm{d}x=\int_{\Omega}a_0(x, u, Du)\cdot \varphi\ \mathrm{d}x
\end{equation}for all $\varphi\in C_c^{\infty}(\Omega;\mathbb{R}^N)$.
\end{defn} 
\begin{rmk}\label{rtf}
Strictly speaking, the test functions $\varphi$ in Definition~\ref{wksoln} should be in $W^{1, p}_0\cap L^{\infty}(\Omega;\mathbb{R}^N)$. However, it suffices to show~\eqref{eq: weakeq} for every $\varphi\in C_c^{\infty}(\Omega;\mathbb{R}^N)$, as~\eqref{eq: weakeq} will then hold for all $\varphi\in W^{1, p}_0\cap L^{\infty}(\Omega;\mathbb{R}^N)$ via a density argument.
\end{rmk}
\begin{rmk}If $u\in W^{1, p}(\Omega;\mathbb{R}^N)$ and satisfies~\eqref{eq: weakeq}, then we say $u$ is a weak solution of~\eqref{eq: bfeq}. Typically, we would assume that the system satisfies the controllable growth conditions, that is,
\begin{align*}
|a(x, z, \zeta)|&\leq K(1+|\zeta|^{p-1})\\
|a_0(x, z, \zeta)|&\leq K_0(1+|\zeta|^{p-1})
\end{align*}for all $(x, z, \zeta)\in \Omega\times\mathbb{R}^N\times\mathbb{R}^{Nn}$, for some $K, K_0>0$ and some fixed $p>1$.\end{rmk} 

When $p=n$, we are in the critical setting, that is, the Sobolev embedding theorem does not say whether or not $u$ is H\"older continuous.

Beck and Frehse~\cite[Thm. 1.4]{Frehse2013} demonstrated that, under zero-Dirchlet boundary conditions, there is at least one weak vector-valued solution $u:\Omega\rightarrow\mathbb{R}^N$ that is locally H\"older continuous in the critical setting. In other words, there is a $u\in W^{1, n}\cap C^{0, \gamma}_{\text{loc}}(\Omega;\mathbb{R}^N)$ for some $\gamma\in (0, 1)$ that weakly solves~\eqref{eq: bfeq}.  

It is of further interest to investigate the regularity of bounded, weak solutions of~\eqref{eq: bfeq}. A possible strategy is to consider cases depending on the particular structure of the principal part. For instance, we might restrict ourselves to diagonal systems, variational systems or non-diagonal systems. The principal part is said to be of diagonal form if 
\begin{equation*}
a^{\alpha}_i(x, z, \zeta)=\sum_{k=1}^n A_{ik}(x, z)\zeta^{\alpha}_k
\end{equation*}for $\alpha\in \{1, \dots, N\},\ i\in\{1, \dots, n\}$ and $(x, z, \zeta)\in\Omega\times\mathbb{R}^N\times\mathbb{R}^{Nn}$. Otherwise, it is of non-diagonal form. Variational systems are Euler-Lagrange systems of variational integrals
\begin{equation*}
w\mapsto \int_{\Omega}f(x, w, Dw)\ \mathrm{d}x
\end{equation*}that have a sufficiently regular integrand $f: \Omega\times\mathbb{R}^N\times\mathbb{R}^{Nn}\rightarrow\mathbb{R}$, in which case we have
\begin{equation*}
a=D_\zeta f\quad\text{and}\quad a_0=D_zf.
\end{equation*}Note that variational systems are not necessarily diagonal in general.

Aside from the natural growth conditions, one typically needs further growth restrictions of the inhomogeneity to prevent certain irregularities. For instance, 
\begin{equation}\label{eq: hmapsol}
u(x)=\frac{x}{|x|}\quad(x\in B(0, 1)\subset\mathbb{R}^3)
\end{equation} is a bounded, weak solution $u\in W^{1, 2}\cap L^{\infty}(B(0, 1);\mathbb{R}^3)$ of 
\begin{equation}\label{eq: hmapeq}
-\triangle u=|Du|^2u\quad\text{in }B(0, 1),
\end{equation} but it is discontinuous at the origin. 

It is, therefore, necessary to impose some further structure assumptions to exclude such solutions. Typically, one controls the growth of the principal part from below via an ellipticity condition:
\begin{equation*}
a(x, z, \zeta)\cdot\zeta\geq\lambda |\zeta|^n,
\end{equation*}for all $(x, z, \zeta)\in \Omega\times\mathbb{R}^N\times\mathbb{R}^{Nn}$ and some $\lambda>0$. To restrict the growth of the inhomogeneity, one can impose a one-sided condition on the inhomogeneity:
\begin{equation*}
a_0(x, z, \zeta)\cdot z\leq\lambda^{\ast}|\zeta|^n
\end{equation*}for all $(x, z, \zeta)\in\Omega\times\mathbb{R}^N\times\mathbb{R}^{Nn}$ and some $\lambda^{\ast}\in (0, \lambda)$. Alternatively, one can impose a smallness condition in terms of the $L^{\infty}$-norm of the solution itself: 
\begin{equation}\label{eq: smallnesscond}
K_0\|u\|_{\infty}<\lambda.
\end{equation} Note that the one-sided condition is a weaker condition than the smallness condition. If $\lambda=K=K_0=1$, then solutions like~\eqref{eq: hmapsol} for~\eqref{eq: hmapeq} would not be considered because the smallness and one-sided conditions are violated. 

Several regularity results are already known under stronger versions of~\eqref{eq: smallnesscond}, see, for example, the list in Hildebrandt's survey paper~\cite[p. 535]{H1982}. More specifically, the assumptions $2K_0M<\lambda$ or $\lambda^{\ast}+K_0M<\lambda$ (known as a two-sided condition), have been widely assumed by many authors for non-diagonal systems in obtaining a number of regularity results, see, for example,~\cite[p. 326]{GQ1978},~\cite[pp. 15--16]{Hamburger1998} and~\cite[Lemma 4.1]{Duz2000}. It has remained a long-standing open problem as to whether the results can still hold for $K_0M<\lambda$.

Hildebrandt and Widman considered diagonal systems and showed that under the smallness condition $K_0M<\lambda$ and when $p=n=2$, bounded, weak solutions are locally H\"older continuous~\cite[Thm. 4.1]{Hildebrandt1975}. They conjectured that the smallness condition could be weakened to a one-sided condition on the inhomogeneity without compromising the regularity upshot. Indeed, for two dimensions, Wiegner~\cite[Thm. 1]{W1981} proved that all bounded, weak solutions  of the diagonal system:
\begin{equation*}
\text{div }\left(A_{ij}(x)D_ju^{\alpha}\right)=g^{\alpha}(x, u, Du),
\end{equation*}
are H\"older continuous when the inhomogeneity satisfies a one-sided condition.

Not every non-diagonal system under the one-sided condition will guarantee that bounded, weak solutions are H\"older continuous in the critical case. Indeed, Beck and Frehse~\cite[\S 3.1]{Frehse2013} gave a counterexample. However, their counterexample is not applicable for our setting as the system does not follow the variational structure explored here. Certainly, it is an open problem whether all bounded, weak solutions of every non-diagonal system with the smallness condition~\eqref{eq: smallnesscond} are H\"older continuous in the critical setting. 

For variational systems, Morrey~\cite[Thm. 4.3]{Morrey66} proved that weak minima for variational systems are H\"older continuous in the critical case. Bounded weak local minima for quadratic functionals under diagonal coefficients, that is, functionals for which the integrand is of the form $f(x, z, \zeta)=A_{ij}(x, z)\zeta_j^{\alpha}\zeta_i^{\alpha}$, are H\"older continuous if the inhomogeneity satisfies a one-sided condition (Giaquinta and Giusti~\cite[Thm. 5.2]{GQ1982}).  Lastly, Beck and Frehse remark~\cite[p. 947]{Frehse2013}:
\begin{quote} It is an interesting, open problem whether \emph{all} solutions (such as non-extremals of the Euler equation) with smooth data are H\"older continuous, in particular for the two-dimensional case $n=p=2$.\end{quote} This paper is in response to the above remark. 

\section{Assumptions and statement of main result}
In this paper, we demonstrate regularity, in the sense of H\"older continuity, of bounded, weak vector-valued solutions $u:\Omega\rightarrow\mathbb{R}^N$ of the following elliptic system:
\begin{equation}\label{eq: maineq}
\text{div }\left(A^{\alpha\beta}_{ij}(x, u)D_ju^{\beta}\right)=\frac{1}{2}\frac{\partial A^{\gamma\beta}_{ij}}{\partial z^{\alpha}}(x, u)D_ju^{\beta}D_iu^{\gamma}\quad\text{in }\Omega\quad(A^{\alpha\beta}_{ij}=A^{\beta\alpha}_{ji}).
\end{equation} We index $\alpha, \beta$ and $\gamma$ from $1$ to $N$, while we index $i$ and $j$ from $1$ to $2$. We let $u_{x_0, r}$ denote the integral average of $u$ on the the ball $B(x_0, r)$. The principal part and the inhomogeneity are defined on $\overline{\Omega}\times\mathbb{R}^N\times\mathbb{R}^{2N}$, and we denote their arguments by $x\in\Omega, z\in\mathbb{R}^N$ and $\zeta\in\mathbb{R}^{2N}$, respectively. The system~\eqref{eq: maineq} is the Euler-Lagrange system of the following quadratic functional:
\begin{equation}\label{eq: quad}
\int_{\Omega}A^{\alpha\beta}_{ij}(x, u)D_ju^{\beta}D_iu^{\alpha}\ \mathrm{d}x. 
\end{equation}Note that the system~\eqref{eq: maineq} is in general allowed to be of non-diagonal type.

We assume that the following hypotheses are satisfied:
\begin{enumerate}
\item[(H1)] The domain $\Omega$ is an open, bounded subset of $\mathbb{R}^2$ with Lipschitz boundary.
\item[(H2)] The coefficients $A^{\alpha \beta}_{ij}(x, z)$ are smooth in $\overline{\Omega}\times\mathbb{R}^N$ and satisfy the following estimates for some $K, K_0>0$ and for all $(x, z)\in \overline{\Omega}\times\mathbb{R}^N$:
\begin{equation}\label{eq: H2}
\left|\frac{\partial A^{\gamma\beta}_{ij}}{\partial z^{\alpha}}(x, z)\right| \leq 2K_0\quad\text{and}\quad |A_{ij}^{\alpha \beta}(x, z)|\leq K.
\end{equation}Note that the estimates~\eqref{eq: H2} imply that the principal part and the inhomogeneity satisfy the natural growth conditions:
\begin{equation}\label{eq: H3}
\left|\frac{1}{2}\frac{\partial A^{\gamma\beta}_{ij}}{\partial z^{\alpha}}(x, z)\zeta_j^{\beta}\zeta_i^{\gamma}\right| \leq K_0|\zeta|^2\text{ and }|A_{ij}^{\alpha \beta}(x, z) \zeta_j^{\beta}|\leq K|\zeta|
\end{equation}for all $x\in\overline{\Omega}, z\in \mathbb{R}^N$ and $\zeta\in \mathbb{R}^{2N}$.
\item[(H3)] The principal part fulfills the ellipticity condition:
\begin{equation*}
A^{\alpha \beta}_{ij}(x, z)\zeta_j^{\beta}\zeta_i^{\alpha}\geq \lambda |\zeta|^2
\end{equation*}for some $\lambda>0$ and for all $(x, z, \zeta)\in\overline{\Omega}\times \mathbb{R}^N\times\mathbb{R}^{2N}$.
\end{enumerate}
\begin{rmk}
The diagonal version of~\eqref{eq: maineq} has connections to geometry, for instance, in the theory of harmonic mappings between Riemannian manifolds, see~\cite{ES1964}.
\end{rmk}

Our problem, which is interesting in its own right, is a particular case of the open problem, mentioned by Beck and Frehse. 
As $u\in W^{1, 2}(\Omega;\mathbb{R}^N)$ and the dimension of $\Omega$ is $2$, we are in the critical setting, and therefore, we cannot deduce H\"older continuity immediately via the Sobolev embedding theorem.

Our main result is the following regularity result for bounded, weak solutions of~\eqref{eq: maineq}:
\begin{thm}\label{regularity}
If $u\in W^{1, 2}\cap L^{\infty}(\Omega;\mathbb{R}^N)$ with $M\equiv\|u\|_{\infty}$ is a bounded, weak solution to the system~\eqref{eq: maineq} under assumptions (H1) to (H3), then for any $\gamma\in (0, 1)$ we have $u\in C^{0, \gamma}_{\text{loc}}(\Omega;\mathbb{R}^N)$.
\end{thm}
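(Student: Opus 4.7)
The plan is to prove Theorem~\ref{regularity} by an indirect blow-up argument, reducing the Hölder continuity to a Morrey-Campanato excess-decay estimate. Fix a target exponent $\gamma\in(0,1)$. By the Campanato characterization of Hölder spaces, $u\in C^{0,\gamma}_{\mathrm{loc}}(\Omega;\mathbb{R}^N)$ will follow once I show that, for every $B(x_0,2R)\Subset\Omega$, the excess
\begin{equation*}
\Phi(x_0,r):=\frac{1}{r^2}\int_{B(x_0,r)}|u-u_{x_0,r}|^{2}\,\mathrm{d}x
\end{equation*}
satisfies $\Phi(x_0,\tau r)\leq \tau^{2\gamma}\Phi(x_0,r)$ for some $\tau=\tau(\gamma)\in(0,1)$ and all $r\leq R$. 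Iterating this inequality across dyadic scales yields $\Phi(x_0,r)\leq Cr^{2\gamma}$, which is equivalent to Hölder continuity of exponent $\gamma$.

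Argue by contradiction. Suppose the decay fails; extract balls $B(x_k,r_k)\Subset\Omega$ along which the inequality reverses, set $c_k:=u_{x_k,r_k}$ and $\lambda_k:=\Phi(x_k,r_k)^{1/2}$, and form the blow-ups
\begin{equation*}
v_k(y):=\frac{u(x_k+r_k y)-c_k}{\lambda_k},\qquad y\in B_1.
\end{equation*}
By construction $\int_{B_1}v_k\,\mathrm{d}y=0$ and $\|v_k\|_{L^2(B_1)}=1$, while $\|u\|_{L^\infty}\leq M$ forces $\lambda_k\leq 2M\sqrt{\pi}$; a preliminary reduction places us in the regime $\lambda_k\to 0$. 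A direct scaling computation shows that $v_k$ weakly solves
\begin{equation*}
\mathrm{div}\bigl(\widetilde A_k(y,v_k)\,Dv_k\bigr)=\lambda_k\,\widetilde a_{0,k}(y,v_k,Dv_k),
\end{equation*}
where $\widetilde A_k(y,z):=A(x_k+r_k y,c_k+\lambda_k z)$ and $|\widetilde a_{0,k}|\leq K_0|Dv_k|^{2}$; the decisive point is that the natural-growth constant on the right-hand side has been rescaled from $K_0$ to $\lambda_k K_0\to 0$.

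Granting a uniform $H^{1}(B_{3/4})$ bound on $\{v_k\}$, the argument concludes cleanly. Rellich yields $v_k\to v_\infty$ strongly in $L^2$ and a.e.; together with $\lambda_k\to 0$ and smoothness of $A$, dominated convergence gives $\widetilde A_k(\cdot,v_k)\to A(x_0,z_\infty)$ in every $L^{p}$, $p<\infty$ (where $c_k\to z_\infty$); and the right-hand side vanishes in $L^1$ since $\|\lambda_k\widetilde a_{0,k}\|_{L^1}\leq\lambda_k K_0\|Dv_k\|_{L^2}^{2}\to 0$. The limit $v_\infty$ therefore weakly solves the linear, constant-coefficient elliptic system $\mathrm{div}(A(x_0,z_\infty)\,Dv_\infty)=0$, whence classical linear theory yields $v_\infty\in C^\infty$ together with the sharp estimate $\int_{B_\tau}|v_\infty-(v_\infty)_\tau|^{2}\,\mathrm{d}y\leq C\tau^{4}$. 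Strong $L^2$ convergence then contradicts the lower bound $\int_{B_\tau}|v_k-(v_k)_\tau|^{2}\,\mathrm{d}y>\tau^{2+2\gamma}$ inherited from the failure hypothesis, once $\tau$ is fixed small enough that $C\tau^{4}<\tau^{2+2\gamma}$ (possible precisely because $\gamma<1$).

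The principal obstacle, and the true core of the proof, is securing the uniform $H^{1}$ bound on $\{v_k\}$. The direct Caccioppoli inequality, testing~\eqref{eq: maineq} against $\eta^{2}(u-c_k)$ and invoking (H3), produces a natural-growth remainder $K_0\|u-c_k\|_{L^\infty(B_{r_k})}\int\eta^{2}|Du|^{2}$ whose absorption into the ellipticity term is tantamount to the smallness hypothesis $2MK_0<\lambda$ that Theorem~\ref{regularity} is explicitly designed to avoid. Closing this gap is where the variational structure and the two-dimensional critical character of the problem must be used: the symmetry $A^{\alpha\beta}_{ij}=A^{\beta\alpha}_{ji}$ from (H2) makes the right-hand side of~\eqref{eq: maineq} an exact $D_{z}$-derivative of the integrand of~\eqref{eq: quad}, and in two dimensions this Jacobian-type structure should yield a compensated-compactness gain (via Gagliardo-Nirenberg together with a Wente / Coifman-Lions-Meyer-Semmes-type improvement) sufficient to trade the borderline natural-growth remainder for a lower-order one. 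Establishing this Caccioppoli without a smallness hypothesis is expected to be the technically heaviest step; once it is in hand, the blow-up procedure above delivers the theorem.
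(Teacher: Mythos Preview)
Your blow-up strategy is the right one in spirit, but you have normalized by the wrong quantity, and this creates precisely the gap you flag at the end. You take $\Phi(x_0,r)=r^{-2}\int_{B_r}|u-u_{x_0,r}|^2$ and set $\lambda_k=\Phi(x_k,r_k)^{1/2}$; then $\|v_k\|_{L^2(B_1)}=1$ is free but $\|Dv_k\|_{L^2(B_1)}$ is not, and you are forced to manufacture a Caccioppoli inequality without smallness. Your proposed route to that Caccioppoli---a Wente/CLMS compensation argument exploiting the symmetry $A^{\alpha\beta}_{ij}=A^{\beta\alpha}_{ji}$---is speculative: the inhomogeneity $\tfrac12\partial_{z^\alpha}A^{\gamma\beta}_{ij}\,D_ju^\beta D_iu^\gamma$ is indeed the $z$-gradient of the Lagrangian density, but that alone does not put it in div--curl or Jacobian form, and no such Caccioppoli estimate for this system (absent $2MK_0<\lambda$) is established here. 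As written, the proof is incomplete at exactly the step you yourself call ``the technically heaviest.''

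The paper sidesteps this entirely by blowing up the \emph{Dirichlet energy} $\Phi(x_0,r)=\int_{B(x_0,r)}|Du|^2$ instead. With $\varepsilon_m^2=\Phi(x_m,r_m)$ and $v_m(y)=(u(x_m+r_my)-u_{x_m,r_m})/\varepsilon_m$, one has $\int_{B_1}|Dv_m|^2=1$ \emph{by construction}, so the uniform $W^{1,2}$ bound is free and no Caccioppoli is ever needed. The smallness hypothesis $\Phi(x_0,r)<\varepsilon_0^2$ is then automatically met for every $x_0$ at small radii by absolute continuity of $\int|Du|^2$, so the singular set is empty. The price is that the contradiction now requires $\int_{B_\tau}|Dv_m|^2\to\int_{B_\tau}|Dv|^2$, i.e.\ \emph{strong} local $L^2$ convergence of the gradients, not just of $v_m$; this is where the work actually lies, and it is carried out not by compensated compactness but by testing the rescaled system against $\eta\,\xi_\sigma(v_m^\alpha-v^\alpha)$ with a truncation $\xi_\sigma$, together with a Radon-measure argument to control the annular error. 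The variational structure enters only mildly, through the bound $|g|\le K_0|\zeta|^2$ (no additive constant), which makes the rescaled right-hand side $\varepsilon_m K_0\|Dv_m\|_{L^2}^2\to 0$.
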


Remarkably, we arrive at the regularity result \emph{without} any further restriction on the growth of the inhomogeneity. In particular, we do not impose a smallness condition or a one-sided condition. Aside from partially resolving the open problem framed in Beck's and Frehse's paper, it extends Wiegner's result~\cite[Thm. 1]{W1981} to non-diagonal systems, it allows for Giaquinta's and Giusti's~\cite{GQ1978} result to hold true without any growth restrictions on the inhomogeneity and it also extends Giaquinta's and Giusti's~\cite{GQ1982} result to not just bounded minima but all critical points of the quadratic functional (without further growth restrictions on the inhomogeneity).
 
\section{Overview of technique}
We recall the integral characterisation of H\"older continuous functions via Campanato spaces $\mathcal{L}^{p, \mu}(\Omega;\mathbb{R}^N)$
for $\mu>n$:
\begin{equation}\label{eq: camp}
\mathcal{L}^{p, \mu}(\Omega;\mathbb{R}^N)\cong C^{0, \alpha}(\overline{\Omega};\mathbb{R}^N)
\end{equation}for some $\alpha\in (0, 1]$. Given $\gamma\in (0, 1)$, we turn our attention to proving that
\begin{equation}\label{eq: reg1}
u\in \mathcal{L}^{2, 2+2\gamma}_{\text{loc}}(\Omega;\mathbb{R}^N),
\end{equation}whence Theorem~\ref{regularity} follows by~\eqref{eq: camp} (for $\alpha=\gamma$). The first step is to obtain an energy-decay estimate on certain balls about a common centre of fixed but (discretely) shrinking radii. An iteration of this energy-decay estimate then shows~\eqref{eq: reg1}.

Establishing the energy-decay estimate, can be done directly or by contradiction. We use a blow-up method, which is an argument by contradiction, to obtain the energy-decay estimate. The technique can be traced back to De Giorgi and Almgren (cited in~\cite[p. 269]{Duz2000}) although they used it on the \emph{excess function} of the solution. The essential idea is to assume that the energy-estimate fails on a sequence of shrinking balls and then to shift and rescale each ball, that is, to blow-up each ball into the unit ball. Furthermore, we have a sequence of solutions to the corresponding sequence of systems in the unit ball. Each of the energies of the blown up solutions also violate the estimate in the unit ball. However, the blown-up solutions converge in the limit to a solution of a constant coefficient, homogeneous and elliptic system. It is known that the energy of such solutions satisfies the estimate, see, for example,~\cite[Chap. III]{GQ83}, resulting in a contradiction.

\section{Preliminaries}

\begin{defn}
Given $u\in W^{1, 2}(\Omega;\mathbb{R}^N)$, we define its \emph{energy} on a given ball $B(x_0, r)\subset\subset\Omega$ as:
\begin{equation}\label{eq: energy}
\Phi(x_0, r)\equiv \int_{B(x_0, r)}|Du|^2\ \mathrm{d}x.
\end{equation}
\end{defn}

We state the following form of Poincar\'e's inequality on balls and refer the reader, for example, to~\cite[Appendix 1 \S 3]{Chen98} for a proof.
\begin{thm}
Let $B(x, r)$ be a ball in $\mathbb{R}^n$ with radius $r$. If $u\in W^{1, p}(B_r)$ for some $p\in[1, \infty)$, then
\begin{equation}\label{eq: P}
\int_{B(x, r)}|u-u_{x, r}|^p\ \mathrm{d}y\leq C(n, p)r^p\int_{B(x, r)}|Du|^p\ \mathrm{d}y.
\end{equation}
\end{thm}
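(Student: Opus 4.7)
The plan is to reduce to the unit-ball case by a change of variables and then prove the inequality on $B_1 \equiv B(0,1)$ by a compactness-and-contradiction argument.

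For the scaling reduction, set $v(y) := u(x + ry)$ for $y \in B_1$. Then $Dv(y) = r\,Du(x+ry)$, and the change of variables $w = x + ry$ gives
\[
\int_{B(x,r)}|u - u_{x,r}|^p\,dw = r^n \int_{B_1}|v - v_{0,1}|^p\,dy,\qquad \int_{B(x,r)}|Du|^p\,dw = r^{n-p}\int_{B_1}|Dv|^p\,dy.
\]
Thus if $\int_{B_1}|v - v_{0,1}|^p\,dy \leq C(n,p)\int_{B_1}|Dv|^p\,dy$ holds for every $v \in W^{1,p}(B_1)$, then \eqref{eq: P} follows on $B(x,r)$ with the same constant $C(n,p)$, independent of $x$ and $r$.

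Next I would prove the inequality on $B_1$ by contradiction. If it failed, there would exist, after normalisation, a sequence $u_k \in W^{1,p}(B_1)$ with $(u_k)_{0,1} = 0$, $\|u_k\|_{L^p(B_1)} = 1$ and $\|Du_k\|_{L^p(B_1)} \to 0$. The sequence is then bounded in $W^{1,p}(B_1)$, so the Rellich-Kondrachov compactness theorem yields a subsequence converging strongly in $L^p(B_1)$ to some $u$. Lower semicontinuity of the weak-gradient $L^p$-norm combined with $\|Du_k\|_{L^p(B_1)} \to 0$ forces $u \in W^{1,p}(B_1)$ with $Du = 0$ almost everywhere, so $u$ is constant on the connected set $B_1$. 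Passing $(u_k)_{0,1} = 0$ to the limit gives $u \equiv 0$, which contradicts $\|u\|_{L^p(B_1)} = \lim \|u_k\|_{L^p(B_1)} = 1$.

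The main obstacle is the compactness step, which rests on Rellich-Kondrachov for the bounded Lipschitz domain $B_1$; apart from that the argument is soft, and it produces no explicit form of the constant $C(n,p)$. For a quantitative proof that yields an explicit constant, one can instead work with the pointwise representation
\[
|u(y) - u_{0,1}| \leq \frac{1}{n|B_1|}\int_{B_1}\frac{|Du(z)|}{|y-z|^{n-1}}\,dz,
\]
obtained on the star-shaped ball via the integrated fundamental theorem of calculus along segments joining $z$ to $y$, averaged over $z$. The $L^p$ bound then follows from Young's convolution inequality, since the Riesz-potential kernel $|y|^{1-n}\chi_{B_2}(y)$ lies in $L^1(\mathbb{R}^n)$. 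Density of $C^1(\overline{B_1}) \cap W^{1,p}(B_1)$ in $W^{1,p}(B_1)$ extends the inequality from smooth functions to all of $W^{1,p}(B_1)$ in either approach.
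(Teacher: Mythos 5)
The paper offers no proof of this statement at all --- it simply refers the reader to a textbook (Chen, Appendix 1, Section 3) --- so there is no in-paper argument to match yours against; what you have written is a self-contained proof, and it is correct. The scaling reduction is exact (in particular $v_{0,1}=u_{x,r}$ under your substitution, which is why the mean-value term transforms cleanly), and it correctly isolates where the factor $r^{p}$ and the $x$- and $r$-independence of the constant come from. The compactness step is the classical ``soft'' proof: Rellich--Kondrachov applies because $B_1$ is a bounded smooth domain, so $W^{1,p}(B_1)\hookrightarrow\hookrightarrow L^{p}(B_1)$ for every $p\in[1,\infty)$; the limit $u$ has vanishing weak gradient (most cleanly seen by testing, $\int_{B_1}u\,D_i\varphi\,\mathrm{d}y=-\lim_k\int_{B_1}D_iu_k\,\varphi\,\mathrm{d}y=0$), is therefore constant on the connected ball, and has zero mean, contradicting $\|u\|_{L^p(B_1)}=1$. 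Its only cost is the lack of an explicit constant, which your second, Riesz-potential route supplies. One small correction there: averaging the fundamental theorem of calculus over segments in the unit ball yields the pointwise bound with constant $2^{n}/(n|B_1|)$ rather than $1/(n|B_1|)$, because the segments joining $y$ to $z$ have length up to $\operatorname{diam}(B_1)=2$ (cf. Gilbarg--Trudinger, Lemma 7.16). This does not affect the conclusion, since only some finite $C(n,p)$ is required and $\bigl\||y|^{1-n}\chi_{B_2}\bigr\|_{L^1(\mathbb{R}^n)}=2n|B_1|<\infty$, so Young's convolution inequality closes the argument exactly as you describe.
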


If we have a solution of a constant coefficient, homogeneous and elliptic system, then its gradient will also solve the same system. We can then apply the following result from Giaquinta's book~\cite[Chap. III, Thm. 2.1]{GQ83} immediately to the gradient in the unit ball $B(0, 1)\subset\mathbb{R}^2$.

\begin{lem}\label{blowupunitball}
Let $b^{\alpha\beta}_{ij}$ be constants for $\alpha, \beta\in\{1\dots, N\}$ and $i,j\in\{1, 2\}$ satisfying:
\begin{equation*}
L|\zeta|^2\geq b^{\alpha\beta}_{ij}\zeta^{\beta}_j\zeta^{\alpha}_i\geq\lambda|\zeta|^2
\end{equation*}for all $\zeta\in\mathbb{R}^{2N}$ and some $\lambda, L>0$. If $u\in W^{1, 2}\cap L^{\infty}(\Omega;\mathbb{R}^N)$ satisfies:
\begin{equation*}
\int_{B(0, 1)}b^{\alpha\beta}_{ij}D_ju^{\beta}D_i\varphi^{\alpha}\ \mathrm{d}x=0
\end{equation*}for every $\varphi\in C_c^{\infty}(B(0, \rho);\mathbb{R}^N)$, then
\begin{equation*}
\Phi(0, \rho)\leq c_0\rho^2\Phi(0, 1)
\end{equation*}for all $\rho\in (0, 1)$ and some $c_0=c_0(N, \lambda, L)>1$. 
\end{lem}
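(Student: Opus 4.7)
The plan is to exploit the constant-coefficient, homogeneous structure of the system to promote $u$ to a smooth function and then transfer a pointwise bound on $Du$ into the required quadratic energy decay in the two-dimensional ball.

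First I would establish interior regularity. Because the $b^{\alpha\beta}_{ij}$ are constants, the Nirenberg difference-quotient method applied to the weak formulation shows that for each $k\in\{1,2\}$, the partial derivative $D_k u$ lies in $W^{1,2}_{\mathrm{loc}}(B(0,1);\mathbb{R}^N)$ and weakly solves the same homogeneous constant-coefficient system. Iterating this argument, $u\in C^{\infty}(B(0,1);\mathbb{R}^N)$ and every partial derivative of $u$ satisfies the same system. In particular, $v:=Du$ is a smooth solution of the same elliptic system in $B(0,1)$.

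Next I would invoke the $L^{\infty}$--$L^{2}$ estimate for smooth solutions of constant-coefficient elliptic systems (one of the standard corollaries of Giaquinta's Chap.~III, Thm.~2.1, applied to $v$): there exists $C=C(N,\lambda,L)>0$ with
\begin{equation*}
\sup_{B(0,1/2)}|Du|^{2}\;\leq\;C\int_{B(0,1)}|Du|^{2}\,\mathrm{d}x \;=\; C\,\Phi(0,1).
\end{equation*}
This is the quantitative heart of the lemma; it can be obtained, for instance, from a Caccioppoli inequality for $D^{k}u$ combined with Moser iteration, or directly by Fourier analysis on a slightly larger concentric ball.

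Finally I would split the conclusion into two cases. For $\rho\in(0,1/2]$, monotonicity of the integral and the volume factor $|B(0,\rho)|=\pi\rho^{2}$ in dimension two give
\begin{equation*}
\Phi(0,\rho)\;\leq\;|B(0,\rho)|\sup_{B(0,1/2)}|Du|^{2}\;\leq\;\pi C\,\rho^{2}\,\Phi(0,1).
\end{equation*}
For $\rho\in(1/2,1)$ we have $\rho^{2}>1/4$, so trivially $\Phi(0,\rho)\leq\Phi(0,1)\leq 4\rho^{2}\Phi(0,1)$. Setting $c_{0}:=\max(\pi C,4)+1$ yields the asserted estimate for all $\rho\in(0,1)$. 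The only nontrivial step is the interior sup-bound on $|Du|$; once that is in hand (and it is entirely standard for constant-coefficient elliptic systems), the decay follows purely from the two-dimensional volume scaling, so no serious obstacle is anticipated.
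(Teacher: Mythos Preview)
Your proposal is correct and follows the same route the paper indicates: the paper does not give a self-contained proof but simply observes that $Du$ again solves the constant-coefficient system and then invokes Giaquinta~\cite[Chap.~III, Thm.~2.1]{GQ83} on the gradient, which is precisely what you do (your sup-bound step is one of the standard corollaries of that theorem, and the remaining volume-scaling argument is the two-dimensional specialisation).
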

\section{The energy-decay estimate}
Henceforth, we let $u\in W^{1, 2}\cap L^{\infty}(\Omega;\mathbb{R}^N)$ with $\|u\|_{L^{\infty}(\Omega;\mathbb{R}^N)}\equiv M$ be a bounded, weak solution of the system~\eqref{eq: maineq} under the assumptions (H1) to (H3). We also let $\Phi(x_0, r)$ as defined in~\eqref{eq: energy} denote its energy on the ball $B(x_0, r)\subset\subset\Omega$.

\begin{prop}\label{blowup}	
Let $\tau\in (0, 1)$ be given. Then there is an $r_0=r_0(\tau, N, K,\lambda)>0$ and an $\varepsilon_0=\varepsilon_0(\tau, N, K,\lambda)>0$ such that if 
\begin{equation*}
\Phi(x_0, r)<\varepsilon_0^2
\end{equation*}for some $B(x_0, r)\subset\subset\Omega$ and some $r\in (0, r_0)$, then 
\begin{equation*}
\Phi(x_0, \tau r)\leq 2c_0\tau^2\Phi(x_0, r).
\end{equation*}Here, $c_0=c_0(N, K, \lambda)>1$ is the constant in Lemma~\ref{blowupunitball}.
\end{prop}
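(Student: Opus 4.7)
The plan is to proceed by contradiction using a blow-up and renormalization argument. If the proposition fails, one can extract sequences $r_n \searrow 0$ and balls $B(x_n, r_n) \subset\subset \Omega$ such that $\lambda_n^2 := \Phi(x_n, r_n) \to 0$ while $\Phi(x_n, \tau r_n) > 2 c_0 \tau^2 \lambda_n^2$. I would introduce the normalized blow-ups
\begin{equation*}
v_n(y) := \frac{u(x_n + r_n y) - (u)_{x_n, r_n}}{\lambda_n}, \qquad y \in B(0, 1),
\end{equation*}
which by a direct change of variables satisfy $\int_{B(0, 1)} |Dv_n|^2 \, dy = 1$. Since each $v_n$ has zero mean, Poincar\'e's inequality~\eqref{eq: P} controls $\|v_n\|_{L^2(B(0, 1))}$, so $(v_n)$ is bounded in $W^{1, 2}(B(0, 1); \mathbb{R}^N)$. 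After passing to subsequences, I may assume $v_n \rightharpoonup v$ in $W^{1, 2}$, $v_n \to v$ strongly in every $L^p$ with $p < \infty$ (by Rellich--Kondrachov) and a.e., as well as $x_n \to x_0 \in \overline{\Omega}$ and $(u)_{x_n, r_n} \to a \in \mathbb{R}^N$ with $|a| \leq M$.

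The second step is to derive the equation satisfied by $v_n$ and identify its limit. Testing the weak formulation of~\eqref{eq: maineq} against $\varphi(x) := \lambda_n \psi((x - x_n)/r_n)$ for $\psi \in C_c^{\infty}(B(0, 1); \mathbb{R}^N)$ and changing variables gives, after dividing by $\lambda_n^2$,
\begin{equation*}
\int_{B(0, 1)} A^{\alpha\beta}_{ij}\bigl(x_n + r_n y,\, (u)_{x_n, r_n} + \lambda_n v_n\bigr) D_j v_n^{\beta} D_i \psi^{\alpha} \, dy = \lambda_n \int_{B(0, 1)} \tfrac{1}{2} \tfrac{\partial A^{\gamma\beta}_{ij}}{\partial z^{\alpha}}\bigl(\cdot\bigr) D_j v_n^{\beta} D_i v_n^{\gamma} \psi^{\alpha} \, dy.
\end{equation*}
Because $(u)_{x_n, r_n} + \lambda_n v_n(y) \to a$ almost everywhere and the coefficients are continuous and uniformly bounded by (H2), dominated convergence yields $A^{\alpha\beta}_{ij}(\cdots) \to b^{\alpha\beta}_{ij} := A^{\alpha\beta}_{ij}(x_0, a)$ in every $L^p$, and together with $Dv_n \rightharpoonup Dv$ this lets me pass to the limit in the left-hand side. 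The right-hand side is bounded in absolute value by $C \lambda_n \|\psi\|_{\infty}$, using (H3) and the normalization $\int |Dv_n|^2 \, dy = 1$, so it vanishes. Hence $v$ is a weak solution of a constant-coefficient, homogeneous, elliptic system on $B(0, 1)$, and Lemma~\ref{blowupunitball} yields $\Phi_v(0, \tau) \leq c_0 \tau^2 \Phi_v(0, 1) \leq c_0 \tau^2$, the last inequality by weak lower semicontinuity.

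It then remains to upgrade $Dv_n \rightharpoonup Dv$ to strong $L^2$ convergence on $B(0, \tau)$: once that is in hand, $\Phi_{v_n}(0, \tau) \to \Phi_v(0, \tau) \leq c_0 \tau^2$, which contradicts the blow-up hypothesis $\Phi_{v_n}(0, \tau) > 2 c_0 \tau^2$ for sufficiently large $n$.

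I expect this strong convergence to be the main obstacle. The natural-growth right-hand side of the rescaled equation obstructs a naive test with $(v_n - v) \eta^2$, because in dimension two $W^{1, 2} \not\hookrightarrow L^{\infty}$: although $v$ is smooth by elliptic regularity for the limit system, $(v_n)$ is not known to be uniformly bounded, and the inhomogeneity $|Dv_n|^2 |v_n - v|$ resists direct control. My plan is to test the rescaled equation against a truncated difference $\eta^2 T_k(v_n - v)$, where $\eta$ is a suitable cutoff and $T_k$ caps the difference componentwise at height $k$. The inhomogeneity is then bounded by $C \lambda_n k$, which tends to zero as $n \to \infty$ for fixed $k$, while the ellipticity (H3) combined with the variational symmetry $A^{\alpha\beta}_{ij} = A^{\beta\alpha}_{ji}$ extracts a coercive term of the form $\int |D(v_n - v)|^2 \eta^2 \, dy$ on the truncation set $\{|v_n - v| \leq k\}$. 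A measure-theoretic argument, based on the uniform Sobolev bound on $v_n - v$, should control the complementary piece as $k \to \infty$ and deliver the desired strong convergence, closing the contradiction.
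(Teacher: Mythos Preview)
Your proposal is correct and follows essentially the same blow-up strategy as the paper: the contradiction setup, the rescaled maps $v_n$, the identification of the constant-coefficient limit system, and the use of Lemma~\ref{blowupunitball} are all identical. For the strong convergence step the paper likewise tests the rescaled equation with a truncation $\eta\,\xi_\sigma(v_m^\alpha-v^\alpha)$, but organises the argument as a two-stage ``limsup inequality $+$ Minty expansion'' rather than extracting coercivity in $D(v_n-v)$ directly (your invocation of the symmetry $A^{\alpha\beta}_{ij}=A^{\beta\alpha}_{ji}$ is not actually needed there), and it handles the cutoff annulus via weak compactness of the Radon measures $\mu_m = |Dv_m|^2\,\mathrm{d}y$; apart from these packaging details the two arguments coincide.
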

\begin{proof}
If the proposition were false for some $\tau\in (0, 1)$, then we can seek out a sequence of balls $B(x_m, r_m)\subset\subset\Omega$ such that $r_m\searrow 0$ and
\begin{equation}\label{eq: rev1d}
\Phi(x_m, r_m)\equiv\varepsilon_m^2\rightarrow 0
\end{equation} as $m\rightarrow\infty$, but
\begin{equation}\label{eq: rev2d}
\Phi(x_m, \tau r_m)>2c_0\tau^2\varepsilon_m^2
\end{equation}for each $m\in\mathbb{N}$.

We shift and rescale, that is, we blow-up each ball $B(x_m,r_m)$ into the unit ball $B\equiv B(0, 1)$ by defining 
\begin{equation*}
y\equiv \frac{x-x_m}{r_m}\quad(m\in\mathbb{N}, x\in B(x_m, r_m)).
\end{equation*}Note that $y\in B$. We also let
\begin{equation*}
v_m(y)\equiv\frac{u(x_m+r_my)-u_{x_m, r_m}}{\varepsilon_m}\quad(y\in B, \ m\in\mathbb{N}).
\end{equation*}By a change of coordinates from $B(x_m, r_m)$ to $B$, we immediately have 
\begin{equation}\label{eq: pi}
(v_m)_{0, 1}=0
\end{equation}and
\begin{equation}\label{eq: pii}
Dv_m(y)=\frac{r_m}{\varepsilon_m}Du(x)
\end{equation}for each $v_m$.

For each $v_m$, we define its energy in $B(z, r)\subseteq B$ as
\begin{equation*}
\Psi_m(z, r)\equiv\int_{B(z, r)}|Dv_m|^2\ \mathrm{d}y.
\end{equation*}Then following a change of coordinates from $B(x_m, r_m)$ to $B$, we deduce that
\begin{equation}\label{eq: piii}
\Psi_{m}(0, 1)=\int_{B(0, 1)}|Dv_m(y)|^2\ \mathrm{d}y=1
\end{equation}from~\eqref{eq: rev1d} and
\begin{equation}\label{eq: piv}
\Psi_{m}(0, \tau)=\int_{B(0, \tau)}|Dv_m(y)|^2\ \mathrm{d}y>2c_0\tau^2
\end{equation}from~\eqref{eq: rev2d}, respectively. 

By Poincar\'e's inequality~\eqref{eq: P},~\eqref{eq: pi} and~\eqref{eq: piii},
\begin{equation*}
\int_{B(0, 1)}\varepsilon_m^2|v_m(y)|^2\ \mathrm{d}y\leq C\varepsilon_m^2\int_{B(0, 1)}|Dv_m(y)|^2\ \mathrm{d}y=C\varepsilon_m^2,
\end{equation*}and consequently,
\begin{equation}\label{eq: subseq1}
\lim_{m\rightarrow\infty}\int_{B(0, 1)}\varepsilon_m^2|v_m(y)|^2\ \mathrm{d}y=0.
\end{equation}As $\overline{\Omega}$ is bounded, there is some ball $B(0, R_0)\subset\mathbb{R}^2$ such that $\Omega\subset B(0, R_0)$, and consequently,
\begin{equation}\label{eq: subseq2}
|x_m|\leq R_0 \quad(m\in\mathbb{N}).
\end{equation}Given that $\|u\|_{\infty}=M$, we also have
\begin{equation}\label{eq: subseq3}
|u_{x_m, r_m}|\leq M\quad(m\in\mathbb{N}).
\end{equation}Finally, we recall that
\begin{equation}\label{eq: subseq4}
\lim_{m\rightarrow\infty}r_m=0.
\end{equation}

It follows from~\eqref{eq: subseq1},~\eqref{eq: subseq2} and~\eqref{eq: subseq3} that, by passing to a subsequence and relabelling, if necessary, as $m\rightarrow\infty$ we have
\begin{equation}\label{eq: subseq5}
\begin{array}{l l}
\varepsilon_mv_m\rightarrow 0 & \quad\text{a.e. in }B\\
(x_m, u_{x_m, r_m})\rightarrow (x_0, u_0) &\quad\text{for some } (x_0, u_0)\in \overline{\Omega}\times\mathbb{R}^N.
\end{array}
\end{equation}Then~\eqref{eq: subseq5},~\eqref{eq: subseq4} and (H2) imply that
\begin{equation}\label{eq: constconv2}
A^{\alpha\beta}_{ij}\big(x_m+r_my, u_{x_m, r_m}+\varepsilon_mv_m(y)\big)\rightarrow b^{\alpha\beta}_{ij}\text{ a.e. in }  B(0, 1)
\end{equation}as $m\rightarrow\infty$ for some constants $b^{\alpha\beta}_{ij}$. These constant coefficients satisfy the ellipticity condition (H3) because the coefficients $A^{\alpha\beta}_{ij}(x_m+r_my, u_{x_m, r_m}+\varepsilon_mv_m(y))$ do for each $m\in\mathbb{N}$ and are smooth in $\overline{\Omega}\times\mathbb{R}^N\times\mathbb{R}^{2N}$.

Poincar\'e's inequality~\eqref{eq: P},~\eqref{eq: pi} and~\eqref{eq: piii} also imply that the sequence $\{v_m\}_{m=1}^{\infty}$ is uniformly bounded in $W^{1, 2}(B(0, 1);\mathbb{R}^N)$ since
\begin{equation*}
\int_{B(0, 1)}|v_m(y)|^2\ \mathrm{d}y\leq C\int_{B(0, 1)}|Dv_m(y)|^2\ \mathrm{d}y=C
\end{equation*}for all $m\in\mathbb{N}$. Therefore, we can pass to a subsequence (that we still denote as $\{v_m\}_{m=1}^{\infty}$) such that as $m\rightarrow\infty$, 
 \begin{equation}\label{eq: usefulconv}
\left\{\begin{array}{l l}
v_{m}\rightarrow v & \text{ in } L^2(B(0, 1);\mathbb{R}^N)\\
Dv_{m}\rightharpoonup Dv& \text{ in }L^2(B(0, 1);\mathbb{R}^{2N})
\end{array}\right.
\end{equation}for some $v\in W^{1, 2}(B(0, 1);\mathbb{R}^N)$. Let the energy of $v$ in the ball $B(\sigma, \rho)\subset B$ be given as:
\begin{equation*}
\Psi(\sigma, \rho)\equiv \int_{B(\sigma, \rho)}|Dv|^2\ \mathrm{d}y.
\end{equation*}Via the lower semicontinuity of the norm with respect to the weak convergence in~\eqref{eq: usefulconv}, we arrive at the following bound on $Dv$:
\begin{equation*}
\|Dv\|_{L^2(B(0, 1))}^2\leq \liminf_{m\rightarrow\infty}\|Dv_m\|_{L^2(B(0, 1))}^2=1.
\end{equation*}This bound implies that
\begin{equation}\label{eq: contra3}
\Psi(0, 1)\leq 1.
\end{equation}

Given any $\varphi\in C_c^{\infty}(B(0, 1);\mathbb{R}^N)$, suppose we can demonstrate that
\begin{equation}\label{eq: cool}
\int_{B(0, 1)}b^{\alpha\beta}_{ij}D_jv^{\beta}D_i\varphi^{\alpha}\ \mathrm{d}y=0.
\end{equation} Furthermore, suppose that we can improve the weak convergence of the gradients to strong convergence, that is,
\begin{equation}\label{eq: usefulconv2}
Dv_m\rightarrow Dv\quad \text{in }L^2_{\text{loc}}(B(0, 1);\mathbb{R}^{2N})
\end{equation}as $m\rightarrow\infty$.
 
Then via Lemma~\ref{blowupunitball},~\eqref{eq: usefulconv2},~\eqref{eq: piv} and~\eqref{eq: contra3} we have
\begin{equation}\label{eq: contraiv}
c_0\tau^2\Psi(0, 1)\geq\Psi(0, \tau)=\lim_{m\rightarrow\infty}\Psi_{m}(0, \tau)\geq2c_0\tau^2\geq2c_0\tau^2\Psi(0, 1).
\end{equation}If $\Psi(0, 1)=0$, then $\Psi(0, \rho)=0$ for each $\rho\in (0, 1)$ because the energy is non-negative. In particular, we would have $\Psi(0, \tau)=0$. However, $\Psi(0, \tau)>0$ by~\eqref{eq: piv} and~\eqref{eq: usefulconv2}. Therefore, $\Psi(0, 1)>0$, and we arrive at the contradiction $1\geq 2$ from~\eqref{eq: contraiv} thereby concluding the proof.
\end{proof}

\begin{rmk}\label{hindrance}
Suppose that we wish to generalise Theorem~\ref{regularity} to higher dimensions. Assume that we have appropriate growth conditions so that a weak solution $u\in W^{1, n}\cap L^{\infty}$ to~\eqref{eq: maineq} makes sense. Naturally, we would define its energy on a ball $B(x, r)\subset\subset\Omega$ as
\begin{equation*}
\Phi^{\ast}(x, r)\equiv\int_{B(x, r)}|Du|^n\ \mathrm{d}x
\end{equation*}The analogue of Proposition~\ref{blowup} in higher dimensions would roughly read something akin to: given a sufficiently small $\tau\in (0, 1)$, there exists an $(\varepsilon_0, r_0)\in (0, \infty)\times (0, \infty)$ such that if for some small enough ball $\Phi^{\ast}(x, r)<\varepsilon_m^n$, then we must have $\Phi(x_0, \tau r)\leq 2c_0^{\ast}\tau^n\Phi(x_0, r)$ for some $c_0^{\ast}=c_0^{\ast}(n, N, K, \lambda)>1$. In any case, we initiate the blow-up argument and expect that 
\begin{equation*}
\int_{B(0, 1)}|Dv_m|^n\ \mathrm{d}y=1
\end{equation*}analogous to~\eqref{eq: piii}. However, as we would have
\begin{equation*}
\int_{B(x_m, r_m)}|Du|^n\ \mathrm{d}x=\varepsilon_m^n,
\end{equation*}then upon rescaling we arrive at
\begin{equation*}
\int_{B(0, 1)}|Dv_m|^n\ \mathrm{d}y=r_m^{2-n}\rightarrow\infty\quad\text{as}\quad m\rightarrow\infty.
\end{equation*}This is not ideal since we do not subsequently have weak compactness of the sequence $\{v_m\}_{m\in\mathbb{N}}\subset W^{1, n}(\Omega;\mathbb{R}^N)$. Without weak compactness we cannot conclude the existence of a limit solution in the unit ball nor can we obtain the analogues of~\eqref{eq: cool} and~\eqref{eq: usefulconv2}. This points to a major obstacle in proving regularity in higher dimensions via the blow-up argument on the energy of the solution as well as the particulars of the argument to the two-dimensional setting.
\end{rmk}
It remains to prove~\eqref{eq: cool} and~\eqref{eq: usefulconv2}. These are formulated as Lemma~\ref{homogsys} and Lemma~\ref{wktostrong}, respectively. For brevity in the forthcoming sections, we let 
\begin{equation*}
A^{\alpha\beta}_{ij}(x_m+r_my, u_{x_m, r_m}+\varepsilon_mv_m(y))\equiv A^{\alpha\beta}_{ij,m}(y)
\end{equation*}and
\begin{equation*}
\frac{\partial A^{\gamma\beta}_{ij}}{\partial z^{\alpha}}(x_m+r_my, u_{x_m, r_m}+\varepsilon_mv_m(y))\equiv\frac{\partial A^{\gamma\beta}_{ij,m}}{\partial z^{\alpha}}(y),
\end{equation*}respectively.
\section{Convergence of the rescaled solutions to a linear system}
The goal of this section is to prove~\eqref{eq: cool}.
\begin{lem}\label{constconv}
For each $\phi\in L^2(B(0, 1);\mathbb{R}^{2N})$, we have
\begin{equation}\label{eq: constconv}
\int_{B(0, 1)}A^{\alpha \beta}_{ij, m}(y)D_jv_m^{\beta}(y)\phi_{i}^{\alpha}(y)\ \mathrm{d}y\rightarrow \int_{B(0, 1)}b^{\alpha \beta}_{ij}D_jv^{\beta}(y)\phi_i^{\alpha}(y)\ \mathrm{d}y,
\end{equation}as $m\rightarrow\infty$.
\end{lem}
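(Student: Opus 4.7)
The plan is to isolate the constant-coefficient limit by splitting the integrand
\begin{equation*}
A^{\alpha\beta}_{ij,m}(y) D_j v_m^\beta(y) \phi_i^\alpha(y) = \bigl(A^{\alpha\beta}_{ij,m}(y) - b^{\alpha\beta}_{ij}\bigr) \phi_i^\alpha(y)\, D_j v_m^\beta(y) + b^{\alpha\beta}_{ij} \phi_i^\alpha(y)\, D_j v_m^\beta(y),
\end{equation*}
and then to handle the two pieces with two different modes of convergence. This turns the problem into pairing strong-times-weak convergence, which is the standard way to pass to the limit in a product where only one factor converges weakly.

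For the second piece, the coefficients $b^{\alpha\beta}_{ij}$ are constants, so the function $y \mapsto b^{\alpha\beta}_{ij} \phi_i^\alpha(y)$ lies in $L^2(B(0,1))$ for each fixed $(\alpha,\beta,i,j)$. Since $Dv_m \rightharpoonup Dv$ in $L^2(B(0,1);\mathbb{R}^{2N})$ by~\eqref{eq: usefulconv}, the definition of weak convergence gives directly
\begin{equation*}
\int_{B(0,1)} b^{\alpha\beta}_{ij} D_j v_m^\beta \, \phi_i^\alpha \, \mathrm{d}y \longrightarrow \int_{B(0,1)} b^{\alpha\beta}_{ij} D_j v^\beta \, \phi_i^\alpha \, \mathrm{d}y.
\end{equation*}

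For the first piece, I would set $g_m(y) \equiv \bigl(A^{\alpha\beta}_{ij,m}(y) - b^{\alpha\beta}_{ij}\bigr)\phi_i^\alpha(y)$ and show $g_m \to 0$ strongly in $L^2$. By~\eqref{eq: constconv2}, $g_m(y) \to 0$ almost everywhere in $B(0,1)$. Moreover, the pointwise bound in (H2) gives $|A^{\alpha\beta}_{ij,m}(y)| \leq K$, hence (passing to the limit) $|b^{\alpha\beta}_{ij}| \leq K$, so $|g_m(y)|^2 \leq 4K^2 |\phi(y)|^2$ with $|\phi|^2 \in L^1(B(0,1))$. Lebesgue's dominated convergence theorem then yields $\|g_m\|_{L^2(B(0,1))} \to 0$. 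Combining with the uniform bound $\|D v_m\|_{L^2(B(0,1))} = 1$ from~\eqref{eq: piii} and applying the Cauchy--Schwarz inequality,
\begin{equation*}
\left| \int_{B(0,1)} g_m(y)\, D_j v_m^\beta(y) \, \mathrm{d}y \right| \leq \|g_m\|_{L^2(B(0,1))} \, \|D_j v_m^\beta\|_{L^2(B(0,1))} \longrightarrow 0.
\end{equation*}
Adding the two contributions finishes the proof.

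There is no serious obstacle here; the only subtle point is that one cannot simply pass $Dv_m \rightharpoonup Dv$ through a product with the variable-coefficient $A^{\alpha\beta}_{ij,m}$, because weak convergence does not interact well with almost-everywhere convergence of a factor. The decomposition above circumvents this by isolating the constant, $m$-independent part against which weak convergence tests cleanly, leaving a remainder whose "test-function side" converges strongly in $L^2$ so that the uniform $L^2$-bound on the gradients suffices.
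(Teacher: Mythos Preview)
Your proposal is correct and follows essentially the same approach as the paper: both split off the constant-coefficient part $b^{\alpha\beta}_{ij}\phi_i^{\alpha}$ and handle it by weak convergence of $Dv_m$, and both control the remainder $(A^{\alpha\beta}_{ij,m}-b^{\alpha\beta}_{ij})\phi_i^{\alpha}$ by showing it tends to zero in $L^2$ (via the pointwise bound from (H2), the a.e.\ convergence~\eqref{eq: constconv2}, and dominated convergence) and pairing with the uniform bound $\|Dv_m\|_{L^2}=1$ through Cauchy--Schwarz. The paper presents the same estimate slightly differently---writing the triangle-inequality/H\"older chain in one display---but the ingredients and logic are identical.
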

\begin{proof}
Given $\phi\in L^2(B(0, 1);\mathbb{R}^{2N})$, we arrive at the following estimate via the triangle inequality, H\"older's inequality and~\eqref{eq: piii}:
\begin{align}\label{eq: rhs}
&\left|\int_{B(0, 1)}[A^{\alpha \beta}_{ij,m}(y)D_jv_m^{\beta}(y)-b^{\alpha\beta}_{ij}D_jv^{\beta}(y)]\phi_i^{\alpha}(y)\ \mathrm{d}y\right|\nonumber\\
&\leq\int_{B(0,\rho)}|A^{\alpha\beta}_{ij,m}(y)-b^{\alpha\beta}_{ij}||D_jv_m^{\beta}(y)||\phi_i^{\alpha}(y)|\ \mathrm{d}y\nonumber\\
&\quad+\left|\int_{B(0, \rho)}b^{\alpha\beta}_{ij}[D_jv_m^{\beta}(y)-D_jv^{\beta}(y)]\phi_i^{\alpha}(y)\ \mathrm{d}y\right|\nonumber\\
&\leq \left(\int_{B(0,1)}|A^{\alpha\beta}_{ij,m}(y)-b^{\alpha\beta}_{ij}|^2|\phi(y)|^2\ \mathrm{d}y\right)^{\frac{1}{2}}\nonumber\\
&\quad+\left|\int_{B(0, \rho)}b^{\alpha\beta}_{ij}[D_jv_m^{\beta}(y)-D_jv^{\beta}(y)]\phi_i^{\alpha}(y)\ \mathrm{d}y\right|.
\end{align}As $m\rightarrow \infty$, we see that 
\begin{equation}\label{eq: rhs1}
\int_{B(0,1)}|A^{\alpha\beta}_{ij,m}(y)-b^{\alpha\beta}_{ij}|^2|\phi(y)|^2\ \mathrm{d}y\rightarrow 0
\end{equation}by (H2) and~\eqref{eq: constconv2} and
\begin{equation}\label{eq: rhs2}
\int_{B(0, \rho)}b^{\alpha\beta}_{ij}[D_jv_m^{\beta}(y)-D_jv^{\beta}(y)]\phi_i^{\alpha}(y)\ \mathrm{d}y\rightarrow 0
\end{equation}by~\eqref{eq: usefulconv}. Therefore,~\eqref{eq: rhs1} and~\eqref{eq: rhs2} imply that the right-hand side of~\eqref{eq: rhs} vanishes as $m\rightarrow\infty$, and consequently,
\begin{equation*}
\lim_{m\rightarrow\infty}\int_{B(0, 1)}A^{\alpha \beta}_{ij,m}(y)D_jv_m^{\beta}(y)\phi_i^{\alpha}(y)\ \mathrm{d}y=\int_{B(0, 1)}b^{\alpha\beta}_{ij}D_jv^{\beta}(y)\phi_i^{\alpha}(y)\ \mathrm{d}y.
\end{equation*}
\end{proof}

\begin{lem}\label{homogsys}
The function $v\in W^{1, 2}(B;\mathbb{R}^N)$ weakly solves the linear system
\begin{equation*}
\text{div }(b^{\alpha\beta}_{ij}D_jv^{\beta})=0.
\end{equation*} 
\end{lem}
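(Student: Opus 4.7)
Proof proposal for Lemma \ref{homogsys}:

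The plan is to start from the weak formulation of the original system \eqref{eq: maineq} for $u$, insert an appropriately scaled test function supported in $B(x_m, r_m)$, change variables to rescale to the unit ball, and then pass to the limit $m \to \infty$ using Lemma \ref{constconv} for the principal part and a direct bound for the inhomogeneity.

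First I would fix an arbitrary $\varphi \in C_c^\infty(B(0,1);\mathbb{R}^N)$ and define the test function $\varphi_m(x) \equiv \varepsilon_m \varphi\!\left(\tfrac{x-x_m}{r_m}\right)$, which lies in $C_c^\infty(B(x_m,r_m);\mathbb{R}^N) \subset C_c^\infty(\Omega;\mathbb{R}^N)$ for all sufficiently large $m$. Using the relation $D_j u(x) = (\varepsilon_m/r_m)\, D_j v_m(y)$ with $y = (x-x_m)/r_m$ and the Jacobian $\mathrm{d}x = r_m^2 \, \mathrm{d}y$, the weak form of \eqref{eq: maineq} tested against $\varphi_m$ becomes, after cancelling a common factor of $\varepsilon_m^2$:
\begin{equation*}
\int_{B(0,1)} A^{\alpha\beta}_{ij,m}(y)\, D_j v_m^\beta\, D_i \varphi^\alpha \, \mathrm{d}y
= \varepsilon_m \int_{B(0,1)} \tfrac{1}{2}\, \tfrac{\partial A^{\gamma\beta}_{ij,m}}{\partial z^\alpha}(y)\, D_j v_m^\beta\, D_i v_m^\gamma\, \varphi^\alpha \, \mathrm{d}y.
\end{equation*}

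Next I would pass to the limit in each side. For the left-hand side, I apply Lemma \ref{constconv} with $\phi_i^\alpha \equiv D_i \varphi^\alpha \in L^2(B(0,1);\mathbb{R}^{2N})$ to obtain convergence to $\int_{B(0,1)} b^{\alpha\beta}_{ij}\, D_j v^\beta\, D_i \varphi^\alpha \, \mathrm{d}y$. For the right-hand side, I use the pointwise bound \eqref{eq: H3} from hypothesis (H2), which gives
\begin{equation*}
\left|\tfrac{1}{2}\, \tfrac{\partial A^{\gamma\beta}_{ij,m}}{\partial z^\alpha}(y)\, D_j v_m^\beta\, D_i v_m^\gamma\, \varphi^\alpha\right| \leq K_0\, \|\varphi\|_\infty\, |Dv_m(y)|^2,
\end{equation*}
together with the normalisation \eqref{eq: piii} that $\int_{B(0,1)} |Dv_m|^2 \, \mathrm{d}y = 1$. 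This bounds the right-hand side by $\varepsilon_m K_0 \|\varphi\|_\infty$, which tends to $0$ because $\varepsilon_m \to 0$ by \eqref{eq: rev1d}. Combining the two limits yields $\int_{B(0,1)} b^{\alpha\beta}_{ij}\, D_j v^\beta\, D_i \varphi^\alpha \, \mathrm{d}y = 0$ for every $\varphi \in C_c^\infty(B(0,1);\mathbb{R}^N)$, which is the desired conclusion.

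The only subtle point is the interaction of scalings with the quadratic-gradient inhomogeneity: had we been in dimension $n \geq 3$, the change of variables would have produced an extra factor of $r_m^{2-n}$ that would prevent us from simply absorbing the inhomogeneity into a vanishing $\varepsilon_m$-factor. This reflects precisely the obstruction described in Remark \ref{hindrance}, and is what makes the present two-dimensional argument work: the factor $\varepsilon_m$ gained from the test function $\varphi_m$ beats the natural growth $|Dv_m|^2$ thanks to the unit $L^2$-bound on $Dv_m$. Once this balance is noted, no further technicalities arise, and the remainder of the argument is a routine application of Lemma \ref{constconv}.
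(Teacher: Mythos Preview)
Your proposal is correct and follows essentially the same route as the paper: rescale the weak formulation of \eqref{eq: maineq} to the unit ball to obtain \eqref{eq: yeq}, bound the rescaled inhomogeneity by $\varepsilon_m K_0\|\varphi\|_\infty$ via (H2) and \eqref{eq: piii}, and invoke Lemma~\ref{constconv} on the principal part. Your choice to build the test function as $\varphi_m(x)=\varepsilon_m\varphi((x-x_m)/r_m)$ rather than first deriving \eqref{eq: yeq} for a generic $\tilde\varphi$ is a cosmetic reorganisation only; the two presentations are equivalent.
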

\begin{proof}
Given any ball $B(x_m, r_m)\subset\subset\Omega$ and any test function $\tilde{\varphi}\in C_c^{\infty}(B(x_m, r_m);\mathbb{R}^N)$, we know that $u$ satisfies the following equation:
\begin{align*}
I&\equiv\int_{B(x_m, r_m)}A^{\alpha\beta}_{ij}(x, u(x))D_ju^{\beta}(x)D_i\tilde{\varphi}^{\alpha}(x)\ \mathrm{d}x\\
&=-\int_{B(x_m, r_m)}\frac{1}{2}\frac{\partial A^{\gamma\beta}_{ij}}{\partial z^{\alpha}}(x, u(x))D_ju^{\beta}(x)D_iu^{\gamma}(x) \tilde{\varphi}^{\alpha}(x)\ \mathrm{d}x\equiv II.\\
\end{align*}By rescaling from $B(x_m, r_m)$ to $B$, we find that
\begin{equation*}
I=\varepsilon_m\int_{B(0, 1)}A^{\alpha \beta}_{ij,m}(y)D_jv_m^{\beta}(y)D_i\tilde{\varphi}^{\alpha}(x_m+r_my)\ \mathrm{d}y
\end{equation*}and
\begin{equation*}
II=-\varepsilon_m^2\int_{B(0, 1)}\frac{1}{2}\frac{\partial A^{\gamma\beta}_{ij,m}}{\partial z^{\alpha}}(y)D_jv_m^{\beta}(y)D_iv_m^{\gamma}(y)\tilde{\varphi}^{\alpha}(x_m+r_my)\ \mathrm{d}y. 
\end{equation*}Writing $\varphi_m(y)$ for $\tilde{\varphi}(x_m +r_my)$ and equating $I$ to $II$, we see that
\begin{align}\label{eq: yeq}
&\int_{B(0, 1)}A^{\alpha \beta}_{ij,m}(y)D_jv_m^{\beta}(y)D_i\varphi_m^{\alpha}(y)\ \mathrm{d}y\nonumber\\
&\qquad=-\varepsilon_m\int_{B(0, 1)}\frac{1}{2}\frac{\partial A^{\gamma\beta}_{ij,m}}{\partial z^{\alpha}}(y)D_jv_m^{\beta}(y)D_iv_m^{\gamma}(y)\varphi_m^{\alpha}(y)\ \mathrm{d}y.
\end{align}

Given any $\varphi\in C_c^{\infty}(B(0, 1);\mathbb{R}^N)$, we arrive at the following estimate for the left-hand side of~\eqref{eq: yeq} by (H2) and~\eqref{eq: piii}:\begin{equation}
\left|\int_{B(0, 1)}A^{\alpha \beta}_{ij,m}(y)D_jv_m^{\beta}(y)D_i\varphi^{\alpha}(y)\ \mathrm{d}y\right|\leq \varepsilon_mK_0\|Dv_m\|_{L^2(B)}^2\|\varphi\|_{\infty}\leq\varepsilon_mK_0\|\varphi\|_{\infty}.
\end{equation} Thus,
\begin{equation}\label{eq: one}
\lim_{m\rightarrow\infty}\int_{B(0, 1)}A^{\alpha \beta}_{ij,m}(y)D_jv_m^{\beta}(y)D_i\varphi^{\alpha}(y)\ \mathrm{d}y=0\quad(\varphi\in C_c^{\infty}(B(0, 1);\mathbb{R}^N))
\end{equation}because $\varepsilon_m\rightarrow 0$ as $m\rightarrow 0$.

Given $\varphi\in C_c^{\infty}(B(0, 1);\mathbb{R}^N)$, we also recall~\eqref{eq: constconv}: 
\begin{equation}\label{eq: two}
\lim_{m\rightarrow\infty}\int_{B(0, 1)}A^{\alpha \beta}_{ij,m}(y)D_jv_m^{\beta}(y)D_i\varphi^{\alpha}(y)\ \mathrm{d}y=\int_{B(0, 1)}b^{\alpha\beta}_{ij}D_jv^{\beta}D_i\varphi^{\alpha}\ \mathrm{d}y.
\end{equation}
 
Equating~\eqref{eq: one} and~\eqref{eq: two} concludes our proof.
\end{proof} 

\section{From weak to strong convergence of the rescaled gradients}

The proof of~\eqref{eq: usefulconv2} is described in this section.

\begin{lem}\label{superior}
For almost every $r\in (0, 1)$, 
\begin{equation}\label{eq: limsupeq}
\limsup_{m\rightarrow\infty}\int_{B(0, r)}A^{\alpha\beta}_{ij,m}(y)D_jv_m^{\beta}(y)D_iv_m^{\alpha}(y)\ \mathrm{d}y\leq\int_{B(0, r)}b^{\alpha\beta}_{ij}D_jv^{\beta}(y)D_iv^{\alpha}(y)\ \mathrm{d}y
\end{equation}
\end{lem}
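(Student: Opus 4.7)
The plan is to test the rescaled weak equation~\eqref{eq: yeq} with $\varphi_m := \eta(v_m - v)$ for a smooth cutoff $\eta \in C_c^\infty(B(0,1))$ satisfying $0 \leq \eta \leq 1$ and $\eta \equiv 1$ on $B(0, r)$, and then refine the cutoff toward $\chi_{B(0, r)}$. Admissibility $\varphi_m \in W^{1,2}_0 \cap L^\infty(B(0, 1); \mathbb{R}^N)$ holds because $v_m \in L^\infty$ (as $u \in L^\infty$) and $v$ is smooth on $\operatorname{supp}\eta \Subset B(0, 1)$ by interior regularity for the constant-coefficient homogeneous linear elliptic system that Lemma~\ref{homogsys} provides. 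Since (H3) gives the pointwise inequality $A^{\alpha\beta}_{ij,m} D_j v_m^\beta D_i v_m^\alpha \geq 0$, we have $\int_{B(0, r)} A^{\alpha\beta}_{ij,m} D_j v_m^\beta D_i v_m^\alpha\,dy \leq \int \eta A^{\alpha\beta}_{ij,m} D_j v_m^\beta D_i v_m^\alpha\,dy$, reducing the task to the weighted version on the right.

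Expanding~\eqref{eq: yeq} with this $\varphi_m$ by the product rule,
\begin{align*}
\int \eta A^{\alpha\beta}_{ij,m} D_j v_m^\beta D_i v_m^\alpha\,dy &= \int \eta A^{\alpha\beta}_{ij,m} D_j v_m^\beta D_i v^\alpha\,dy - \int A^{\alpha\beta}_{ij,m} D_j v_m^\beta D_i\eta\,(v_m - v)^\alpha\,dy \\
&\quad - \varepsilon_m \int \tfrac{1}{2}(\partial_{z^\alpha} A^{\gamma\beta}_{ij})_m D_j v_m^\beta D_i v_m^\gamma\,\eta\,(v_m - v)^\alpha\,dy.
\end{align*}
The first integral on the right tends to $\int \eta b^{\alpha\beta}_{ij} D_j v^\beta D_i v^\alpha\,dy$ by Lemma~\ref{constconv} applied with $\phi_i^\alpha := \eta D_i v^\alpha$. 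The second is bounded in absolute value by $K\|Dv_m\|_{L^2}\|D\eta\|_\infty\|v_m - v\|_{L^2}$ and vanishes by~\eqref{eq: piii} and the strong $L^2$-convergence in~\eqref{eq: usefulconv}.

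Everything then reduces to the third (rescaled-inhomogeneity) term. Splitting $\varepsilon_m(v_m - v) = \varepsilon_m v_m - \varepsilon_m v$, the $\varepsilon_m v$-part contributes at most $\varepsilon_m K_0 \|v\|_{L^\infty(\operatorname{supp}\eta)}\|Dv_m\|_{L^2}^2 = O(\varepsilon_m)$ by (H2) and~\eqref{eq: piii}. The $\varepsilon_m v_m$-part is the genuine critical-case obstruction: using $|\varepsilon_m v_m(y)| = |u(x_m + r_m y) - u_{x_m, r_m}| \leq 2M$ together with (H2) gives only the $m$-uniform bound $2MK_0\|Dv_m\|_{L^2}^2 \leq 2MK_0$, with no apparent decay. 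My plan to tackle it is to rescale back to the original coordinates, where this integral equals $\varepsilon_m^{-2}\int_{B(x_m,r_m)} \tfrac{1}{2}\partial_{z^\alpha} A^{\gamma\beta}_{ij}(x, u)\,D_j u^\beta\,D_i u^\gamma\,\tilde\eta_m(x)\,(u - u_{x_m, r_m})^\alpha\,dx$, with $\tilde\eta_m(x) := \eta((x - x_m)/r_m)$, and then exploit the two special features of the setting: dimension $2$ and the variational symmetry $A^{\alpha\beta}_{ij} = A^{\beta\alpha}_{ji}$ that makes~\eqref{eq: maineq} the Euler--Lagrange system of~\eqref{eq: quad}.

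The main obstacle will be extracting genuine decay from this rewritten integral, because a naive test of $u$'s equation against $\tilde\eta_m(u - u_{x_m, r_m})$ merely reproduces the rescaled identity. I expect the decisive input to be a Hardy-space / compensated-compactness estimate in the spirit of Coifman--Lions--Meyer--Semmes: in two dimensions, the quadratic expression $\partial_z A\cdot Du\cdot Du$ carries Jacobian-type structure, sharpened by the variational symmetry, which places it in the local Hardy space $H^1$ with norm controlled by $\|Du\|_{L^2(B(x_m,r_m))}^2 = \varepsilon_m^2$. Paired in $H^1$--$\mathrm{BMO}$ duality with the oscillation $u - u_{x_m, r_m}$, which lies in $L^\infty \cap \mathrm{VMO}$ with uniform bound $2M$ and whose mean-oscillation vanishes as $r_m \to 0$, this yields a genuinely vanishing estimate and forces the contribution to vanish as $m \to \infty$. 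Once this is in hand, refining $\eta$ toward $\chi_{B(0, r)}$ completes the weighted limsup bound; the ``almost every $r$'' condition is a technical artifact of Fubini-type considerations (ensuring, for instance, $\int_{\partial B(0, r)} |Dv|^2\,d\mathcal{H}^1 < \infty$ and that the outer refinement $\int \eta b^{\alpha\beta}_{ij} D_j v^\beta D_i v^\alpha\,dy \to \int_{B(0,r)} b^{\alpha\beta}_{ij} D_j v^\beta D_i v^\alpha\,dy$ picks up no spurious boundary mass), excluding a null exceptional set in $(0, 1)$.
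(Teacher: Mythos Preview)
Your identification of the critical obstruction is exactly right: with the plain test function $\eta(v_m-v)$, the inhomogeneity contributes a term of size $O(1)$ rather than $o(1)$, and this is where the proof lives or dies. However, your proposed resolution via $H^1$--$\mathrm{BMO}$ duality and compensated compactness is not justified and is the genuine gap. The variational symmetry $A^{\alpha\beta}_{ij}=A^{\beta\alpha}_{ji}$ does \emph{not} by itself endow the inhomogeneity $\tfrac12\,\partial_{z^\alpha}A^{\gamma\beta}_{ij}(x,u)\,D_ju^\beta D_iu^\gamma$ with a div--curl or Jacobian structure; CLMS-type Hardy-space bounds apply to specific null-form quadratic expressions, not to a generic $\partial_z A\cdot Du\cdot Du$ with arbitrary smooth bounded coefficients satisfying only (H1)--(H3). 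Without that structure there is no reason for a local $H^1$ bound of order $\|Du\|_{L^2(B(x_m,r_m))}^2$, and your argument stalls precisely at the step you yourself flagged as ``the main obstacle''. The appeal to $u\in\mathrm{VMO}$ on the dual side would in any case be delicate, since a uniform $\mathrm{BMO}$ bound alone gives no smallness.

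The paper sidesteps this obstacle by an elementary device you did not consider: it replaces $(v_m-v)^\alpha$ in the test function by its truncation $\xi_\sigma(v_m^\alpha-v^\alpha)$ at level $\sigma$. Then $\|\varphi_m\|_\infty\le\sigma$ uniformly in $m$, so the inhomogeneity term is bounded by $\varepsilon_m K_0\sigma\to 0$ for every fixed $\sigma$, with no structural hypothesis on $\partial_z A$ needed. The price is that the principal-part integral now carries indicator factors $\chi_{\{|v_m^\alpha-v^\alpha|<\sigma\}}$, and one must send $\sigma\to\infty$ \emph{after} $m\to\infty$; this interchange is handled as a separate lemma (Lemma~\ref{inbtwn}). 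Finally, the ``almost every $r$'' does not come from a Fubini argument on $|Dv|^2$ but from weak-$\ast$ compactness of the Radon measures $\mu_m(E)=\int_E|Dv_m|^2\,dy$: the annular error $\int_{B_r\setminus B_s}|Dv_m|^2$ is controlled via $\limsup_m\mu_m(\overline{B_r\setminus B_s})\le\mu(\overline{B_r\setminus B_s})$, and one needs $\mu(\partial B_r)=0$ to make this vanish as $s\nearrow r$. Since $\mu$ is a finite measure, this holds for all but countably many $r\in(0,1)$.
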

\begin{proof}
We will write $B$ instead of $B(0, 1)$ for brevity. For each $m\in\mathbb{N}$, we define the following sequence of measures:
\begin{equation}\label{eq: meas}
\mu_m(E)\equiv\inf\left\{\int_{G}|Dv_m|^2\ \mathrm{d}y\ \big|\ E\subset G,\ G\subset B\text{ is Borel}\right\}\quad (E\subset B).
\end{equation} By~\eqref{eq: piii}, the measures $\mu_m$ are uniformly bounded in the space of finite Radon measures on $B$. Hence, there is a finite Radon measure $\mu$ on $B$ such that upon passing to a subsequence, if necessary, we have that
\begin{equation}\label{eq: supconv}
\limsup_{m\rightarrow\infty}\mu_m(\mathcal{K})\leq \mu(\mathcal{K})
\end{equation}for each compact $\mathcal{K}\subset B$. We refer the reader to~\cite[\S 1.9]{GE92}, for instance, concerning the basic theory of weak convergence and compactness of Radon measures including the proof of~\eqref{eq: supconv}.

Since $\mu(B)<\infty$ we have that $\mu(\partial B_r)=0$ for all but countably many $r\in(0, 1)$. Such a result is a consequence of finite measures and can be found, for example, in~\cite[Prop. 1.15]{Fo07}. 

Let $r\in (0, 1)$ be such that 
\begin{equation}\label{eq: mu}
\mu(\partial B_r)=0
\end{equation} and let $s\in (0, r)$. We let $\eta\in C_c^{\infty}(B(0, r);\mathbb{R})$ be a smooth cutoff function such that $\eta\equiv 1\text{ on }B_s$.

Next, fix $\sigma\in (0, \infty)$ and define $\xi_{\sigma}:\mathbb{R}\rightarrow \mathbb{R}$ as follows:
\begin{equation*}
\xi_{\sigma}(t)=\left\{\begin{array}{l r}
\sigma & t>\sigma\\
t & -\sigma\leq t\leq \sigma\\
-\sigma & t<-\sigma.
\end{array}\right.
\end{equation*}The function $\xi_{\sigma}$ is not differentiable only on the set $\{\sigma, -\sigma\}$. However, as $|\{\sigma, -\sigma\}|=0$, the function $\xi_{\sigma}$ has a unique weak derivative that agrees with the classical derivative of $\xi_{\sigma}$ upto the set $\{\sigma, -\sigma\}$. Thus, 
\begin{equation}\label{eq: xideriv}
\xi_{\sigma}'(t)=\left\{\begin{array}{l r}
0 & t>\sigma\\
1 & -\sigma<t<\sigma\\
0 & t<-\sigma.
\end{array}\right.
\end{equation}is the weak derivative of $\xi_{\sigma}$. 

Finally, for each $m\in \mathbb{N}$ we consider the following test function:
\begin{equation*}
\varphi_{m}^{\alpha}(y)\equiv \eta(y)\xi_{\sigma}(v_m^{\alpha}(y)-v^{\alpha}(y))\quad (y\in B, \alpha\in\{1, \dots, N\}).
\end{equation*}Note that $\varphi_m\in W^{1, 2}_0\cap L^{\infty}(B;\mathbb{R}^N)$ for each $m\in\mathbb{R}^N$. By Remark~\ref{rtf}, it is, therefore, a suitable test function to use in~\eqref{eq: yeq} as for each $m\in\mathbb{N}$, $v_m\in W^{1, 2}\cap L^{\infty}(B;\mathbb{R}^N)$ is a bounded, weak solution to
\begin{equation*}
\text{div }(A^{\alpha\beta}_{ij, m}D_jv_m^{\beta})=\varepsilon_m\frac{1}{2}\frac{\partial A^{\gamma\beta}_{ij, m}}{\partial z^{\alpha}}D_jv_m^{\beta}D_iv_m^{\gamma}\quad\text{in } B.
\end{equation*} 

Since $\xi_{\sigma}'\in L^{\infty}(\mathbb{R})$ and $(v_m^{\alpha}-v^{\alpha})$ has a weak derivative in $B$ for each $m\in\mathbb{N}$ and $\alpha\in\{1, \dots, N\}$, we have $\xi_{\sigma}(v_m^{\alpha}-v^{\alpha})$ has a weak derivative in $B$ for each $m\in\mathbb{N}$ and $\alpha\in\{1, \dots, N\}$ given by
\begin{equation}\label{eq: kwanza}
D_i[\xi_{\sigma}(v_m^{\alpha}-v^{\alpha})]=\left\{\begin{array}{l r}
\xi_{\sigma}'(v_m^{\alpha}-v^{\alpha})(D_iv_m^{\alpha}-D_iv^{\alpha}) &\quad\text{if } (v_m^{\alpha}-v^{\alpha})\notin\{\sigma,-\sigma\}\\
0 & \quad\text{if }(v_m^{\alpha}-v^{\alpha})\in\{\sigma, -\sigma\}
\end{array}\right.
\end{equation}for a.e. $y\in B$, see, for example,~\cite[Thm. 7.8]{GT83}. Clearly, $|D[\xi_{\sigma}(v_m-v)]|\in L^2(B)$ for each $m\in\mathbb{N}$ as $v_m-v\in W^{1, 2}(B;\mathbb{R}^N)$ for each $m\in\mathbb{N}$.

As each $v_m\in L^1(B;\mathbb{R}^N)$, it follows that $|v_m|: B\rightarrow\mathbb{R}^{\ast}$ is measurable and $|v_m|<\infty$ a.e. in $B$. A measurable function must be defined at least a.e. in $B$ so that one can extend the function to the whole ball $B$ without affecting its measurability or altering its equivalence class under the `almost everywhere' equivalence relation. Subsequently, one then works with the extension. Thus, every $v_m$ is defined a.e. in $B$. Therefore,~\eqref{eq: kwanza} only fails on the null set $\{|v_m|=\infty\}$ or where $v_m$ is not defined (which is also a null set).  

By~\eqref{eq: xideriv}, we note that
\begin{equation}\label{eq: halafu}
\xi_{\sigma}'(v_m^{\alpha}-v^{\alpha})=\left\{\begin{array}{l l}
0 & \quad\text{if }v_m^{\alpha}-v^{\alpha}>\sigma\\
1 & \quad\text{if }-\sigma<v_m^{\alpha}-v^{\alpha}<\sigma\\
0 & \quad\text{if }v_m^{\alpha}-v^{\alpha}<-\sigma
\end{array}\right.
\end{equation}for each $m\in\mathbb{N}$ and $\alpha\in\{1, \dots, N\}$. For each $\alpha\in \{1, \dots, N\}, \ m\in\mathbb{N}$ and $\sigma\in (0, \infty)$, we define
\begin{equation}\label{eq: bigtheta}
\Theta_{\sigma, m}^{\alpha}\equiv\{y\in B\ |\ |v_m^{\alpha}(y)-v^{\alpha}(y)|<\sigma\}.
\end{equation}Therefore, with~\eqref{eq: halafu} and~\eqref{eq: bigtheta} we can rewrite~\eqref{eq: kwanza} as
\begin{equation}\label{eq: kwanza2}
D_i[\xi_{\sigma}(v_m^{\alpha}-v^{\alpha})]=(D_iv_m^{\alpha}-D_iv^{\alpha})\chi_{\Theta_{\sigma, m}^{\alpha}}\quad\text{a.e. in }B.
\end{equation}

Concurrently, since $\eta(\cdot)\in C_c^{\infty}(B)$ and $\xi_{\sigma}(v_m^{\alpha}(\cdot)-v^{\alpha}(\cdot))\in W^{1, 2}(B)$ for each $m\in\mathbb{N}$ and $\alpha\in\{1, \dots, N\}$, then $(\eta(\cdot)\xi_{\sigma}(v_m^{\alpha}(\cdot)-v^{\alpha}(\cdot)
))\in W^{1, 2}(B)$. Moreover, we have 
\begin{equation*}
D_i\varphi_m^{\alpha}\equiv \eta(D_iv_m^{\alpha}-D_iv^{\alpha})\chi_{\Theta_{\sigma, m}^{\alpha}}+D_i\eta\xi_{\sigma}(v_m^{\alpha}-v^{\alpha})\quad\text{a.e. in }B
\end{equation*}by the product rule for weak derivatives, see, for example,~\cite[p. 261]{Evans}, and~\eqref{eq: kwanza2}.

We then substitute $\varphi_m$ into~\eqref{eq: yeq} to deduce that
\begin{align}\label{eq: troublesome}
&\int_{B_r}\eta(y) A^{\alpha\beta}_{ij,m}(y)D_jv_m^{\beta}(y)D_iv_m^{\alpha}(y)\chi_{\Theta_{\sigma, m}^{\alpha}}(y)\ \mathrm{d}y\nonumber\\
&\leq\int_{B_r}A^{\alpha\beta}_{ij,m}(y)D_jv_m^{\beta}(y)D_iv^{\alpha}(y)\eta(y)\chi_{\Theta_{\sigma, m}^{\alpha}}(y)\ \mathrm{d}y\nonumber\\
&+\int_{B_r\setminus B_s}|A^{\alpha\beta}_{ij, m}(y)||D_jv_m^{\beta}(y)||D\eta(y)||\xi_{\sigma}(v_m^{\alpha}(y)-v^{\alpha}(y))|\ \mathrm{d}y\nonumber\\
&+\varepsilon_m\int_{B_r}\left|\frac{1}{2}\frac{\partial A^{\gamma \beta}_{ij,m}}{\partial z^{\alpha}}(y)D_jv_m^{\beta}(y)D_iv_m^{\gamma}(y)\eta(y)\xi_{\sigma}(v_m^{\alpha}(y)-v^{\alpha}(y))\right|\ \mathrm{d}y\nonumber\\
&\equiv I+II+III.
\end{align} By (H3) and the fact that $\eta\geq 0$, the left-hand side of~\eqref{eq: troublesome} is non-negative, and consequently, we can write
\begin{equation}\label{eq: troublesome1}
\int_{B_s}A^{\alpha\beta}_{ij,m}(y)D_jv_m^{\beta}(y)D_iv_m^{\alpha}(y)\chi_{\Theta_{\sigma, m}^{\alpha}}(y)\ \mathrm{d}y\leq I+II+III.
\end{equation}In particular, we note that the limit superior of the terms on the right-hand side of~\eqref{eq: troublesome1} are non-negative.

Following the argument to arrive at the estimate~\eqref{eq: rhs} with the additional fact that $\eta\chi_{\Theta_{\sigma, m}^{\alpha}}\leq 1$ for all $m\in\mathbb{N}$, we have
\begin{align}\label{eq: RHS}
&\left|\int_{B(0, 1)}[A^{\alpha \beta}_{ij,m}(y)D_jv_m^{\beta}(y)-b^{\alpha\beta}_{ij}D_jv^{\beta}(y)]\eta(y)\chi_{\Theta_{\sigma, m}^{\alpha}}(y)D_iv^{\alpha}(y)\ \mathrm{d}y\right|\nonumber\\
&\leq\int_{B(0,\rho)}|A^{\alpha\beta}_{ij,m}(y)-b^{\alpha\beta}_{ij}||D_jv_m^{\beta}(y)||D_iv^{\alpha}(y)|\ \mathrm{d}y\nonumber\\
&\quad+\left|\int_{B(0, \rho)}b^{\alpha\beta}_{ij}[D_jv_m^{\beta}(y)-D_jv^{\beta}(y)]D_iv^{\alpha}(y)\ \mathrm{d}y\right|\nonumber\\
&\leq \left(\int_{B(0,1)}|A^{\alpha\beta}_{ij,m}(y)-b^{\alpha\beta}_{ij}|^2|Dv(y)|^2\ \mathrm{d}y\right)^{\frac{1}{2}}\nonumber\\
&\quad+\left|\int_{B(0, \rho)}b^{\alpha\beta}_{ij}[D_jv_m^{\beta}(y)-D_jv^{\beta}(y)]D_iv^{\alpha}(y)\ \mathrm{d}y\right|,
\end{align}and consequently, the right-hand side of~\eqref{eq: RHS} vanishes as $m\rightarrow\infty$ by~\eqref{eq: rhs1} and~\eqref{eq: rhs2} with $\varphi=Dv$. In other words,
\begin{align*}
\lim_{m\rightarrow\infty}&\int_{B_r}A^{\alpha\beta}_{ij,m}(y)D_jv_m^{\beta}(y)D_iv^{\alpha}(y)\eta(y)\chi_{\Theta_{\sigma, m}^{\alpha}}(y)\ \mathrm{d}y\\
&=\int_{B_r}b^{\alpha\beta}_{ij}D_jv^{\beta}(y)D_iv^{\alpha}(y)\eta(y)\chi_{\Theta_{\sigma, m}^{\alpha}}(y)\ \mathrm{d}y, 
\end{align*}and therefore,
\begin{align}\label{eq: I}
\limsup_{m\rightarrow\infty} I&=\lim_{m\rightarrow\infty} I\nonumber\\
&=\lim_{m\rightarrow\infty}\int_{B_r}A^{\alpha\beta}_{ij,m}(y)D_jv_m^{\beta}(y)D_iv^{\alpha}(y)\eta(y)\chi_{\Theta_{\sigma, m}^{\alpha}}(y)\ \mathrm{d}y\nonumber\\
&=\int_{B_r}b^{\alpha\beta}_{ij}D_jv^{\beta}D_iv^{\alpha}\eta\chi_{\Theta_{\sigma, m}^{\alpha}}\ \mathrm{d}y\nonumber\\
&\leq \int_{B_r}b^{\alpha\beta}_{ij}D_jv^{\beta}D_iv^{\alpha}\ \mathrm{d}y.
\end{align}

We use (H2) and Young's inequality on II to deduce
\begin{equation*}
II\leq \frac{K^2}{2}\mu_m(\overline{B_r\setminus B_s})+\frac{1}{2}\|D\eta\|_{\infty}^2\sum_{\alpha=1}^N\|\xi_{\sigma}(v_m^{\alpha}-v^{\alpha})\|_2^2.
\end{equation*}The strong convergence in~\eqref{eq: usefulconv} implies that $\|\xi_{\sigma}(v_m^{\alpha}-v^{\alpha})\|_2^2\rightarrow 0$ as $m\rightarrow\infty$ for each $\alpha\in\{1,\dots, N\}$. This result and~\eqref{eq: supconv} applied to $\mathcal{K}\equiv\overline{B_r\setminus B_s}$ imply that
\begin{equation}\label{eq: II}
\limsup_{m\rightarrow\infty} II\leq \frac{K^2}{2}\mu(\overline{B_r\setminus B_s}).
\end{equation}
Finally, for III, by (H2), the definition of $\xi_{\sigma}$ and~\eqref{eq: piii}, it follows that
\begin{equation}\label{eq: III}
\limsup_{m\rightarrow\infty}III\leq \limsup_{m\rightarrow\infty} \varepsilon_m K_0\sigma=0.
\end{equation}Next, we take the limit superior in~\eqref{eq: troublesome1} as $m\rightarrow\infty$ to arrive at the following estimate for all $\sigma\in (0, \infty)$ and for all $s\in (0, r)$ using the bounds from~\eqref{eq: I} to~\eqref{eq: III}:
\begin{align}\label{eq: F12}
\limsup_{m\rightarrow\infty}\int_{B_s}A^{\alpha\beta}_{ij, m}(y)D_jv_m^{\beta}(y)D_iv_m^{\alpha}(y)\chi_{\Theta_{\sigma, m}^{\alpha}}(y)\ \mathrm{d}y&\leq\int_{B_r}b^{\alpha\beta}_{ij}D_jv^{\beta}D_iv^{\alpha}\ \mathrm{d}y\nonumber\\
&+\frac{K^2}{2}\mu(\overline{B_r\setminus B_s}).
\end{align}Suppose that for all $s\in (0, r)$ we can establish that
\begin{align}\label{eq: inbetween}
&\lim_{\sigma\rightarrow\infty}\limsup_{m\rightarrow\infty}\int_{B_s}A^{\alpha\beta}_{ij, m}(y)D_jv_m^{\beta}(y)D_iv_m^{\alpha}(y)\chi_{\Theta_{\sigma, m}^{\alpha}}(y)\ \mathrm{d}y\nonumber\\
&=\limsup_{m\rightarrow\infty}\int_{B_s}A^{\alpha\beta}_{ij, m}(y)D_jv_m^{\beta}(y)D_iv_m^{\alpha}(y)\ \mathrm{d}y.
\end{align}Upon sending $\sigma\rightarrow\infty$ in~\eqref{eq: F12}, we would then deduce from~\eqref{eq: inbetween} that
\begin{equation}\label{eq: F1}
\limsup_{m\rightarrow\infty}\int_{B_s}A^{\alpha\beta}_{ij, m}(y)D_jv_m^{\beta}(y)D_iv_m^{\alpha}(y)\ \mathrm{d}y\leq\int_{B_r}b^{\alpha\beta}_{ij}D_jv^{\beta}D_iv^{\alpha}\ \mathrm{d}y+\frac{K^2}{2}\mu(\overline{B_r\setminus B_s})
\end{equation}for each $s<r$. As $s\nearrow r$, it follows that 
\begin{equation}\label{eq: muzero}
\mu(\overline{B_r\setminus B_s})\rightarrow\mu(\partial B_r)=0
\end{equation}by~\eqref{eq: mu}.

Now we consider the function $F: (0, r]\rightarrow (0, \infty)$ defined as
\begin{equation*}
F(s)\equiv\limsup_{m\rightarrow\infty}\int_{B_s}A^{\alpha\beta}_{ij, m}(y)D_jv_m^{\beta}(y)D_iv_m^{\alpha}(y)\ \mathrm{d}y
\end{equation*}and the sequence
\begin{equation*}
I_m\equiv\int_{B_r}A^{\alpha, \beta}_{ij, m}(y)D_jv_m^{\beta}(y)D_iv_m^{\alpha}(y)\ \mathrm{d}y
\end{equation*}for each $m\in\mathbb{N}$. Both are bounded below by $0$ via (H3) and bounded above by $K$ following (H2) and~\eqref{eq: piii}. Moreover, the non-negativity of $F$ in turn implies that it is monotone increasing. As the sequence $\{I_m\}_{m=1}^{\infty}$ is bounded, by passing to a subsequence and relabelling, if necessary, we have that $I_m$ converges to some $I_0\in[0, \infty)$. A consequence of the convergence is that $\limsup_{m\rightarrow\infty}I_m=\lim_{m\rightarrow\infty} I_m$, and a further consequence is that $F$ is defined at $r$ as $I_0$. It remains to show that
\begin{equation*}
\lim_{s\nearrow r} F(s)=F(r).
\end{equation*}Given $s<r$ and $m\in\mathbb{N}$, we define
\begin{equation}\label{eq: jm}
J_m^s\equiv\int_{B_s}A^{\alpha, \beta}_{ij, m}(y)D_jv_m^{\beta}(y)D_iv_m^{\alpha}(y)\ \mathrm{d}y.
\end{equation}As $B_s\subset B_r$ and the integrand in $J_m^s$ is non-negative by (H3), then $J_m^s\leq I_m$ for each $s<r$ and $m\in\mathbb{N}$. Therefore, $J_m^s$ is uniformly bounded above by $K$ also. Consequently, by passing to a subsequence and relabelling, if necessary, we have that $J_m^s$ converges to some $J_0^s$ for each $s\in (0, r)$. Therefore, $\limsup_{m\rightarrow\infty}J_m^s=\lim_{m\rightarrow\infty}J_m^s$ for each $s\in (0, r)$. In summary,
\begin{equation}\label{eq: summary1}
\left\{\begin{array}{l l}
I_0=\lim_{m\rightarrow\infty}I_m=\limsup_{m\rightarrow\infty} I_m& \\
J_0^s=\lim_{m\rightarrow\infty}J_m^s=\limsup_{m\rightarrow\infty}J_m^s & \quad\text{for each }s<r.
\end{array}\right.
\end{equation}Now by (H2) and~\eqref{eq: meas}, we calculate that
\begin{equation}\label{eq: summary2}
\int_{B_r\setminus B_s}A^{\alpha\beta}_{ij, m}(y)D_jv_m^{\beta}(y)D_iv_m^{\alpha}(y)\ \mathrm{d}y\leq K\mu_m(\overline{B_r\setminus B_s})
\end{equation}for each $s<r$. Via~\eqref{eq: supconv} with $\mathcal{K}=\overline{B_r\setminus B_s}$ we come to the conclusion that
\begin{equation}\label{eq: summary3}
\limsup_{m\rightarrow\infty}\mu_m(\overline{B_r\setminus B_s})\leq\mu(\overline{B_r\setminus B_s})\quad (s<r).
\end{equation}From the definition of $F$,~\eqref{eq: summary1},~\eqref{eq: summary2} and~\eqref{eq: summary3}, we arrive at the inequality
\begin{align*}
0\leq F(r)-F(s)&=\limsup_{m\rightarrow\infty}I_m-\limsup_{m\rightarrow\infty}J_m^s\\
&=\lim_{m\rightarrow\infty}(I_m-J_m^s)\\
&=\lim_{m\rightarrow\infty}\int_{B_r\setminus B_s}A^{\alpha\beta}_{ij, m}(y)D_jv_m^{\beta}(y)D_iv_m^{\alpha}(y)\ \mathrm{d}y\\
&\leq\lim_{m\rightarrow\infty}K\mu_m(\overline{B_r\setminus B_s})\\
&\leq\limsup_{m\rightarrow\infty}K\mu_m(\overline{B_r\setminus B_s})\\
&\leq K\mu(\overline{B_r\setminus B_s})
\end{align*}for each $s<r$, and consequently, by~\eqref{eq: muzero} we have
\begin{equation}\label{eq: s2r}
F(r)=\lim_{s\nearrow r}F(s).
\end{equation}Finally, we take $s\nearrow r$ in~\eqref{eq: F1} to conclude by~\eqref{eq: s2r} and~\eqref{eq: muzero} that
\begin{equation*}
\limsup_{m\rightarrow\infty}\int_{B_r}A^{\alpha\beta}_{ij,m}(y)D_jv_m^{\beta}(y)D_iv_m^{\alpha}(y)\ \mathrm{d}y\leq \int_{B_r}b^{\alpha\beta}_{ij}D_jv^{\beta}D_iv^{\alpha}\ \mathrm{d}y.
\end{equation*}
 \end{proof}

It remains to prove~\eqref{eq: inbetween}. For each $(\sigma, s)\in(0, \infty)\times(0, r)$, let
\begin{equation*}
H(\sigma, s)\equiv\limsup_{m\rightarrow\infty}\int_{B_s}A^{\alpha\beta}_{ij, m}(y)D_jv_m^{\beta}(y)D_iv_m^{\alpha}(y)\chi_{\Theta_{\sigma, m}^{\alpha}}(y)\ \mathrm{d}y.
\end{equation*}Furthermore, given $(m, \sigma, s)\in \mathbb{N}\times(0, \infty)\times(0, r)$ let  
\begin{equation*}
\mathfrak{H}_m(\sigma, s)\equiv\sup_{k\geq m}\int_{B_s}A^{\alpha\beta}_{ij, k}(y)D_jv_k^{\beta}(y)D_iv_k^{\alpha}(y)\chi_{\Theta_{\sigma, k}^{\alpha}}(y)\ \mathrm{d}y.
\end{equation*}Then 
\begin{equation}\label{eq: hs}
H(\sigma, s)\equiv\lim_{m\rightarrow\infty}\mathfrak{H}_m(\sigma, s)\quad((\sigma, s)\in(0, \infty)\times(0, r)).
\end{equation}In a similar fashion, for each $s\in(0, r)$ we define
\begin{equation*}
G(s)\equiv\limsup_{m\rightarrow\infty}\int_{B_s}A^{\alpha\beta}_{ij, m}(y)D_jv_m^{\beta}(y)D_iv_m^{\alpha}(y)\ \mathrm{d}y,
\end{equation*}and for each $(m, s)\in \mathbb{N}\times (0, r)$, let
\begin{equation}\label{eq: gsup}
\mathfrak{G}_m(s)\equiv\sup_{k\geq m}\int_{B_s}A^{\alpha\beta}_{ij, k}(y)D_jv_k^{\beta}(y)D_iv_k^{\alpha}(y)\ \mathrm{d}y.
\end{equation}Consequently,
\begin{equation}\label{eq: gs}
G(s)\equiv\lim_{m\rightarrow\infty}\mathfrak{G}_m(s)\quad(s\in (0, r)).
\end{equation}Proving~\eqref{eq: inbetween} is equivalent to proving the following lemma.

\begin{lem}\label{inbtwn}
For each $s\in (0, r)$,
\begin{equation}\label{eq: inbtwn}
\lim_{\sigma\rightarrow\infty}H(\sigma, s)=G(s).
\end{equation}
\end{lem}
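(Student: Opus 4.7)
Set $E^\alpha_{\sigma,k}:=B\setminus\Theta^\alpha_{\sigma,k}=\{y\in B:|v_k^\alpha(y)-v^\alpha(y)|\ge\sigma\}$ and $E_{\sigma,k}:=\bigcup_{\alpha=1}^{N}E^\alpha_{\sigma,k}$. My plan is to first reduce the equality $\lim_{\sigma\to\infty}H(\sigma,s)=G(s)$ to an estimate on the $\mu_k$-mass of $E_{\sigma,k}$, and then to control that mass via Egorov's theorem together with the weak convergence of measures $\mu_m\rightharpoonup\mu$ from \eqref{eq: supconv}.

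\textbf{Step 1 (reduction).} The integrand of $G(s)$ differs from that of $H(\sigma,s)$ by $\sum_{\alpha,\beta,i,j}A^{\alpha\beta}_{ij,k}D_jv_k^\beta D_iv_k^\alpha\chi_{E^\alpha_{\sigma,k}}$, whose absolute value (H2) bounds pointwise by $KN|Dv_k|^2\chi_{E_{\sigma,k}}$. Applying this inside the $\sup_{k\ge m}$ appearing in \eqref{eq: gsup} and the analogous definition of $\mathfrak H_m$, and invoking the elementary inequality $|\sup_k a_k-\sup_k b_k|\le\sup_k|a_k-b_k|$, I obtain $|\mathfrak G_m(s)-\mathfrak H_m(\sigma,s)|\le KN\sup_{k\ge m}\mu_k(B_s\cap E_{\sigma,k})$. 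Passing to $m\to\infty$ via \eqref{eq: gs} and \eqref{eq: hs},
\[
|G(s)-H(\sigma,s)|\le KN\,\limsup_{k\to\infty}\mu_k(B_s\cap E_{\sigma,k}),
\]
so the lemma reduces to showing $\lim_{\sigma\to\infty}\limsup_{k\to\infty}\mu_k(B_s\cap E_{\sigma,k})=0$.

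\textbf{Step 2 (Egorov and weak convergence of measures).} By the strong convergence $v_m\to v$ in $L^2(B;\mathbb R^N)$ from \eqref{eq: usefulconv}, after passing to a further subsequence $v_m\to v$ a.e. on $B$. For any $\eta>0$, Egorov's theorem (together with inner regularity of Lebesgue measure) produces a compact $F_\eta\subset B$ with $|B\setminus F_\eta|<\eta$ on which $v_m\to v$ uniformly. For each fixed $\sigma>0$, uniform convergence yields some $m_0(\sigma,\eta)$ such that $|v_m-v|<\sigma$ on $F_\eta$ for $m\ge m_0$; hence $E_{\sigma,m}\cap F_\eta=\emptyset$, i.e., $E_{\sigma,m}\subset B\setminus F_\eta$. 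Since each $\mu_k$ agrees with $|Dv_k|^2\,\mathrm d y$ on Borel subsets of $B$ and $\overline{B_s\setminus F_\eta}\subset B$ is compact (as $s<1$), \eqref{eq: supconv} gives
\[
\limsup_{k\to\infty}\mu_k(B_s\cap E_{\sigma,k})\le\limsup_{k\to\infty}\mu_k\bigl(\overline{B_s\setminus F_\eta}\bigr)\le\mu\bigl(\overline{B_s\setminus F_\eta}\bigr),
\]
and the right-hand side is independent of $\sigma$, so $\lim_{\sigma\to\infty}\limsup_k\mu_k(B_s\cap E_{\sigma,k})\le\mu(\overline{B_s\setminus F_\eta})$ for every $\eta>0$.

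\textbf{Main obstacle.} The step I expect to be hardest is letting $\eta\to 0$ in the bound $\mu(\overline{B_s\setminus F_\eta})$. Egorov only controls the \emph{Lebesgue} measure of $B\setminus F_\eta$, whereas $\mu$ is a finite Radon measure that may carry a singular part (point atoms created by concentration of $|Dv_m|^2$), and small Lebesgue measure does not automatically yield small $\mu$-measure. To circumvent this I would refine the Egorov construction by first selecting, via inner regularity of $\mu$ on $\overline{B_s}$, a compact $K_\eta\subset B_s$ with $\mu(\overline{B_s}\setminus K_\eta)<\eta$, and then applying Egorov inside $K_\eta$ to produce a compact $F_\eta\subset K_\eta$ with $|K_\eta\setminus F_\eta|<\eta$ and uniform convergence on $F_\eta$. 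The estimate then splits as $\mu(\overline{B_s\setminus F_\eta})\le\eta+\mu(K_\eta\setminus F_\eta)$; the remaining term is dealt with either by absolute continuity of the Radon--Nikodym part of $\mu$ (after arranging $K_\eta$ to exclude a sufficiently small open neighborhood of the singular support of $\mu$, absorbing the singular mass into the first $\eta$) or, equivalently, by uniform higher integrability of $\{|Dv_m|\}_m$ via Gehring's lemma applied to the rescaled system \eqref{eq: yeq}, whose inhomogeneity carries the small factor $\varepsilon_m\to 0$. Sending $\eta\to 0$ then yields the desired tail estimate and, together with Step 1, concludes the proof.
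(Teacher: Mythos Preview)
Your Step~1 reduction is clean and correct. The gap is in Step~2, and you identify it yourself: Egorov controls only the Lebesgue measure of $B\setminus F_\eta$, whereas what you need is $\mu(\overline{B_s\setminus F_\eta})\to 0$. The limit measure $\mu$ can carry atoms (concentration of $|Dv_m|^2$ is precisely the phenomenon one cannot rule out in the borderline $W^{1,2}$ setting in two dimensions), so small Lebesgue measure gives no control. Your first patch---peel off the singular support of $\mu$ via inner regularity and then apply Egorov inside $K_\eta$---does not close the loop: after Egorov you are left with $\mu(K_\eta\setminus F_\eta)$ where again only $|K_\eta\setminus F_\eta|<\eta$ is known, so the difficulty recurs verbatim. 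Your second patch, uniform higher integrability for $\{Dv_m\}$ via Gehring's lemma on the rescaled systems~\eqref{eq: yeq}, would indeed work (it would render $\{|Dv_m|^2\}$ equi-integrable and hence $\mu\ll\mathrm dx$), but it is a substantial extra ingredient that you neither state precisely nor prove, and the paper does not need it.

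The paper's route is far more elementary and sidesteps the uniformity-in-$k$ issue entirely. First, for each \emph{fixed} $k$, dominated convergence (dominator $KN|Dv_k|^2\in L^1(B_s)$) yields
\[
\lim_{\sigma\to\infty}\int_{B_s}A^{\alpha\beta}_{ij,k}D_jv_k^{\beta}D_iv_k^{\alpha}\chi_{\Theta^\alpha_{\sigma,k}}\,\mathrm dy=\int_{B_s}A^{\alpha\beta}_{ij,k}D_jv_k^{\beta}D_iv_k^{\alpha}\,\mathrm dy.
\]
Then, for each fixed $m$, one proves $\lim_{\sigma\to\infty}\mathfrak H_m(\sigma,s)=\mathfrak G_m(s)$ by a direct supremum argument: given $\varepsilon>0$, pick a single $k_*\ge m$ with $\int_{B_s}A^{\alpha\beta}_{ij,k_*}D_jv_{k_*}^{\beta}D_iv_{k_*}^{\alpha}\,\mathrm dy>\mathfrak G_m(s)-\varepsilon$, and invoke the displayed convergence for this one $k_*$. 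Finally pass $m\to\infty$ using~\eqref{eq: hs} and~\eqref{eq: gs}. The point is that your bound $|\mathfrak G_m-\mathfrak H_m|\le KN\sup_{k\ge m}\mu_k(B_s\cap E_{\sigma,k})$, though valid, discards too much: the supremum of the tails $\mu_k(E_{\sigma,k})$ need not vanish uniformly in $k$ as $\sigma\to\infty$, but one never needs it to---it suffices to control the tail for one near-extremal index $k_*$ at a time.
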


\begin{proof}
Let $s\in (0, r)$ be given. We will now prove that~\eqref{eq: inbtwn} holds.

Since~\eqref{eq: usefulconv} implies that $|v_k^{\alpha}-v^{\alpha}|$ is uniformly bounded in $L^1(B)$ for each $\alpha\in\{1, \dots, N\}$, then in turn 
\begin{equation*}
|B\setminus\{y\in B\ |\ |v_k^{\alpha}(y)-v^{\alpha}(y)|<\infty\}| =0.
\end{equation*}Fixing $m\in\mathbb{N}$, we have 
\begin{equation}\label{eq: dct1}
A^{\alpha\beta}_{ij, k}(y)D_jv_k^{\beta}(y)D_iv_k^{\alpha}(y)\chi_{\Theta_{\sigma, k}^{\alpha}}(y)\rightarrow A^{\alpha\beta}_{ij, k}(y)D_jv_k^{\beta}(y)D_iv_k^{\alpha}(y)\quad\text{a.e. }y\in B_s,
\end{equation}as $\sigma\rightarrow\infty$ for every $k\geq m$. Next we observe that, by (H3) and the inequality $0\leq\chi_{\Theta_{\sigma, k}^{\alpha}}\leq 1$, for all $k\geq m$
\begin{equation}\label{eq: dct2}
0\leq A^{\alpha\beta}_{ij, k}(y)D_jv_k^{\beta}(y)D_iv_k^{\alpha}(y)\chi_{\Theta_{\sigma, k}^{\alpha}}(y)\leq A^{\alpha\beta}_{ij, k}(y)D_jv_k^{\beta}(y)D_iv_k^{\alpha}(y)\quad\text{a.e. }y\in B_s
\end{equation}and for all $\sigma>0$. Furthermore, by the non-negative sequence $\{J_m^s\}_{m=1}^{\infty}$ defined in~\eqref{eq: jm}, we have the bound:
\begin{equation}\label{eq: dct3}
\int_{B_s}A^{\alpha\beta}_{ij, k}(y)D_jv_k^{\beta}(y)D_iv_k^{\alpha}(y)\ \mathrm{d}y\leq K,
\end{equation} and therefore,~\eqref{eq: dct1},~\eqref{eq: dct2} and~\eqref{eq: dct3} allow us to deduce, via Lebesgue's dominated convergence theorem, that 
\begin{equation}\label{eq: km}
\lim_{\sigma\rightarrow\infty}\int_{B_s}A^{\alpha\beta}_{ij, k}(y)D_jv_k^{\beta}(y)D_iv_k^{\alpha}(y)\chi_{\Theta_{\sigma, k}^{\alpha}}(y)\ \mathrm{d}y=\int_{B_s} A^{\alpha\beta}_{ij, k}(y)D_jv_k^{\beta}(y)D_iv_k^{\alpha}(y)\ \mathrm{d}y
\end{equation}for each $k\geq m$.

Next we claim that 
\begin{equation}\label{eq: interim}
\lim_{\sigma\rightarrow\infty}\mathfrak{H}_m(\sigma, s)=\mathfrak{G}_m(s).
\end{equation}Suppose that~\eqref{eq: interim} is not true. We note that $\mathfrak{H}_m(\sigma, s)\geq 0$ for each $(m, \sigma, s)\in \mathbb{N}\times (0, \infty)\times(0, r)$. Hence, there exists an $\varepsilon>0$ such that for all $\mathfrak{M}>0$ there is a $\sigma>\mathfrak{M}$ with
\begin{equation}\label{eq: lb1}
\mathfrak{G}_m(s)-\mathfrak{H}_m(\sigma, s)\geq\varepsilon.
\end{equation}For each $k\geq m$,  
\begin{equation}\label{eq: lb2}
\mathfrak{H}_m(\sigma, s)\geq \int_{B_s}A^{\alpha\beta}_{ij, k}(y)D_jv_k^{\beta}(y)D_iv_k^{\alpha}(y)\chi_{\Theta_{\sigma, k}^{\alpha}}(y)\ \mathrm{d}y.
\end{equation}By substituting~\eqref{eq: lb2} into~\eqref{eq: lb1}, we arrive at:
\begin{equation}\label{eq: lb3}
\varepsilon\leq\mathfrak{G}_m(s)-\int_{B_s}A^{\alpha\beta}_{ij, k}(y)D_jv_k^{\beta}(y)D_iv_k^{\alpha}(y)\chi_{\Theta_{\sigma, k}^{\alpha}}(y)\ \mathrm{d}y\quad(k\geq m).
\end{equation}As $\sigma\rightarrow\infty$, 
\begin{equation}\label{eq: lb4}
\varepsilon\leq\mathfrak{G}_m(s)-\int_{B_s}A^{\alpha\beta}_{ij, k}(y)D_jv_k^{\beta}(y)D_iv_k^{\alpha}(y)\ \mathrm{d}y\quad(k\geq m)
\end{equation}by~\eqref{eq: km}.

On the other hand, the definition of $\mathfrak{G}_m$ in~\eqref{eq: gsup} implies that there is some $k_{\ast}\geq m$ such that
\begin{equation}\label{eq: gsup2}
\mathfrak{G}_m(s)-\int_{B_s}A^{\alpha\beta}_{ij, k_{\ast}}(y)D_jv_{k_{\ast}}^{\beta}(y)D_iv_{k_{\ast}}^{\alpha}(y)\ \mathrm{d}y<\varepsilon.
\end{equation}As~\eqref{eq: lb4} is true for all $k\geq m$, it is true for $k_{\ast}$, but this contradicts~\eqref{eq: gsup2}. Therefore, we conclude that~\eqref{eq: interim} is true.

Now given $s\in (0, r),\ \sigma>0$ and $\varepsilon>0$, we know by definitions of $H$ and $G$ that there exists an $m_0=m_0(s, \sigma, \varepsilon)\in\mathbb{N}$ such that for each $m>m_0$ 
\begin{equation*}
0\leq G(s)-H(\sigma, s)<\varepsilon+\mathfrak{G}_m(s)-\mathfrak{H}_m(\sigma, s).
\end{equation*}Via~\eqref{eq: interim} 
\begin{equation*}
0\leq\lim_{\sigma\rightarrow\infty}(G(s)- H(\sigma, s))=G(s)-\lim_{\sigma\rightarrow\infty}H(\sigma, s)<\varepsilon+\mathfrak{G}_m(s)-\lim_{\sigma\rightarrow\infty}\mathfrak{H}_m(\sigma, s)=\varepsilon.
\end{equation*}As $\varepsilon>0$ is arbitrary, we conclude that
\begin{equation*}
G(s)=\lim_{\sigma\rightarrow\infty}H(\sigma, s)\quad(s<r).
\end{equation*}
\end{proof}

\begin{lem}\label{wktostrong}
Locally, the sequence of rescaled gradients $\{Dv_m\}_{m=1}^{\infty}\subset L^2(B(0, 1);\mathbb{R}^{2N})$ converge strongly to $Dv$ in the $L^2$-norm, that is,
\begin{equation*}
Dv_m\rightarrow Dv\quad\text{in }L_{\text{loc}}^2(B(0,1);\mathbb{R}^{2N})
\end{equation*}as $m\rightarrow\infty$.
\end{lem}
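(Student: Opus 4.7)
The plan is to deduce strong convergence from ellipticity together with the $\limsup$ estimate in Lemma~\ref{superior} and the weak-type convergence in Lemma~\ref{constconv}. Fix $r\in(0,1)$ for which Lemma~\ref{superior} holds; since such $r$ form a full-measure subset of $(0,1)$, any compact $K\subset B(0,1)$ is contained in some such $\overline{B_r}$, so it suffices to prove $Dv_m\to Dv$ in $L^2(B_r;\mathbb{R}^{2N})$. By the ellipticity assumption (H3), applied pointwise to $A^{\alpha\beta}_{ij,m}(y)$ and to the tensor $Dv_m(y)-Dv(y)$, we have
\begin{equation*}
\lambda\int_{B_r}|Dv_m-Dv|^2\,\mathrm{d}y \leq \int_{B_r}A^{\alpha\beta}_{ij,m}(D_jv_m^\beta-D_jv^\beta)(D_iv_m^\alpha-D_iv^\alpha)\,\mathrm{d}y.
\end{equation*}

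Expanding the right-hand side yields four terms:
$T_1=\int_{B_r}A^{\alpha\beta}_{ij,m}D_jv_m^\beta D_iv_m^\alpha$,
$T_2=\int_{B_r}A^{\alpha\beta}_{ij,m}D_jv_m^\beta D_iv^\alpha$,
$T_3=\int_{B_r}A^{\alpha\beta}_{ij,m}D_jv^\beta D_iv_m^\alpha$, and
$T_4=\int_{B_r}A^{\alpha\beta}_{ij,m}D_jv^\beta D_iv^\alpha$. I would then treat each term separately. For $T_1$, Lemma~\ref{superior} gives $\limsup_{m\to\infty}T_1\leq \int_{B_r}b^{\alpha\beta}_{ij}D_jv^\beta D_iv^\alpha\,\mathrm{d}y$. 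For $T_2$, I would apply Lemma~\ref{constconv} with the test function $\phi_i^\alpha=\chi_{B_r}\,D_iv^\alpha\in L^2(B(0,1);\mathbb{R}^{2N})$ to conclude that $T_2\to\int_{B_r}b^{\alpha\beta}_{ij}D_jv^\beta D_iv^\alpha\,\mathrm{d}y$. For $T_3$, note that by (H2), $A^{\alpha\beta}_{ij,m}$ is uniformly bounded by $K$ and converges a.e. to $b^{\alpha\beta}_{ij}$ by~\eqref{eq: constconv2}; hence the dominated convergence theorem gives $A^{\alpha\beta}_{ij,m}D_jv^\beta\to b^{\alpha\beta}_{ij}D_jv^\beta$ strongly in $L^2(B_r)$, and pairing with $D_iv_m^\alpha\rightharpoonup D_iv^\alpha$ weakly in $L^2(B_r)$ (from~\eqref{eq: usefulconv}) yields $T_3\to\int_{B_r}b^{\alpha\beta}_{ij}D_jv^\beta D_iv^\alpha\,\mathrm{d}y$. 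For $T_4$, a direct application of the dominated convergence theorem gives $T_4\to\int_{B_r}b^{\alpha\beta}_{ij}D_jv^\beta D_iv^\alpha\,\mathrm{d}y$.

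Combining these four conclusions via the inequality $\limsup(a-b-c+d)\leq \limsup a - \lim b - \lim c + \lim d$ (valid when the last three limits exist), I obtain
\begin{equation*}
\limsup_{m\to\infty}(T_1-T_2-T_3+T_4)\leq \int_{B_r}b^{\alpha\beta}_{ij}D_jv^\beta D_iv^\alpha\,\mathrm{d}y - 2\int_{B_r}b^{\alpha\beta}_{ij}D_jv^\beta D_iv^\alpha\,\mathrm{d}y + \int_{B_r}b^{\alpha\beta}_{ij}D_jv^\beta D_iv^\alpha\,\mathrm{d}y = 0.
\end{equation*}
Together with the ellipticity bound this forces $\limsup_{m\to\infty}\lambda\int_{B_r}|Dv_m-Dv|^2\,\mathrm{d}y\leq 0$, and since the integrand is non-negative, $Dv_m\to Dv$ in $L^2(B_r;\mathbb{R}^{2N})$, as required.

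I do not expect a serious obstacle: the core work has already been done in Lemmas~\ref{constconv} and~\ref{superior}. The only delicate point is the treatment of the mixed term $T_3$, where the convergence of a sequence of the form (strongly convergent)(weakly convergent) must be invoked, and the bookkeeping detail that Lemma~\ref{superior} restricts attention to a.e.\ $r\in(0,1)$, which is nonetheless sufficient to conclude $L^2_{\text{loc}}$-convergence by exhausting compact subsets of $B(0,1)$.
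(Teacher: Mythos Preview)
Your proposal is correct and follows essentially the same route as the paper: use ellipticity (H3) to bound $\lambda\int_{B_r}|Dv_m-Dv|^2$ by the bilinear form in $A^{\alpha\beta}_{ij,m}$, expand into four terms, and handle them via Lemma~\ref{superior}, Lemma~\ref{constconv}, and dominated convergence. The only cosmetic difference is your treatment of the mixed term $T_3$: the paper disposes of it ``similarly'' to $T_2$ (implicitly using the symmetry $A^{\alpha\beta}_{ij}=A^{\beta\alpha}_{ji}$, which makes $T_2=T_3$), whereas you argue directly via the pairing of a strongly convergent factor with a weakly convergent one---both are valid.
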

\begin{proof}Let $r\in (0, 1)$ such that~\eqref{eq: limsupeq} holds. By (H3),

\begin{align}\label{eq: 234a}
&\lambda\int_{B_r}|Dv_m-Dv|^2\ \mathrm{d}y\\
&\leq\int_{B_r}A^{\alpha\beta}_{ij,m}(y)(D_jv_m^{\beta}(y)-D_jv^{\beta}(y))(D_iv_m^{\alpha}(y)-D_iv^{\alpha}(y))\ \mathrm{d}y\nonumber\\
&=\int_{B_r}A^{\alpha\beta}_{ij,m}(y)D_jv_m^{\beta}(y)D_iv_m^{\alpha}(y)\ \mathrm{d}y+\int_{B_r}A^{\alpha\beta}_{ij,m}(y)D_jv^{\beta}(y)D_iv^{\alpha}(y)\ \mathrm{d}y\nonumber\\
&\ -\int_{B_r}A^{\alpha\beta}_{ij,m}(y)D_jv_m^{\beta}(y)D_iv^{\alpha}(y)\ \mathrm{d}y-\int_{B_r}A^{\alpha\beta}_{ij,m}(y)D_jv^{\beta}(y)D_iv_m^{\alpha}(y)\ \mathrm{d}y\nonumber\\
&\equiv I+II+III+IV.
\end{align} By~\eqref{eq: limsupeq}, we recall that
\begin{equation}\label{eq: 1}
\limsup_{m\rightarrow\infty} I\leq \int_{B_r}b^{\alpha\beta}_{ij}D_jv^{\beta}D_iv^{\alpha}\ \mathrm{d}y.
\end{equation}Using (H2),~\eqref{eq: constconv2} and the fact that $Dv\in L^2(B;\mathbb{R}^{2N})$ we deduce via an application of Lebesgue's dominated convergence theorem that
\begin{equation}\label{eq: 2}
\limsup_{m\rightarrow\infty}II=\lim_{m\rightarrow\infty}II=\int_{B_r}b^{\alpha\beta}_{ij}D_jv^{\beta}D_iv^{\alpha}\ \mathrm{d}y.
\end{equation}We use~\eqref{eq: constconv} with $\phi_i^{\alpha}=D_iv^{\alpha}$ to deduce for III that
\begin{equation}\label{eq: 3}
\limsup_{m\rightarrow\infty}III=\lim_{m\rightarrow\infty}III=-\int_{B_r}b^{\alpha\beta}_{ij}D_jv^{\beta}D_iv^{\alpha}\ \mathrm{d}y,
\end{equation}and similarly, for IV we have that
\begin{equation}\label{eq: 4}
\limsup_{m\rightarrow\infty}IV=\lim_{m\rightarrow\infty}IV=-\int_{B_r}b^{\alpha\beta}_{ij}D_jv^{\beta}D_iv^{\alpha}\ \mathrm{d}y.
\end{equation}

Now we let $m\rightarrow\infty$ in~\eqref{eq: 234a} and use~\eqref{eq: 1} to~\eqref{eq: 4} to conclude that
\begin{equation*}
\lim_{m\rightarrow\infty}\int_{B_r}|Dv_m-Dv|^2\ \mathrm{d}y=0.
\end{equation*}That is, $Dv_m\rightarrow Dv$ in $L^2(B_r;\mathbb{R}^{2N})$ as $m\rightarrow\infty$. Hence, we can pass to a subsequence such that $Dv_m\rightarrow Dv$ a.e. in $B_r$ as $m\rightarrow\infty$. This is true for a.e. $r\in (0, 1)$. Given any ball $B(y_0, \rho)\subset\subset B$, there exists an $r\in (0, 1)$ such that $B(y_0, \rho)\subset\subset B_r$ and $Dv_m\rightarrow Dv$ strongly in $L^2(B_r;\mathbb{R}^{2N})$. Therefore,
\begin{equation*}
Dv_m\rightarrow Dv\quad\text{in }L^2_{\text{loc}}(B;\mathbb{R}^{2N})
\end{equation*}as $m\rightarrow\infty$.
\end{proof}
\begin{rmk}
Some of the ideas in Lemma~\ref{superior} and Lemma~\ref{wktostrong} are from Hamburger's paper~\cite[pp. 25--30]{Hamburger1998}
\end{rmk}
\section{The iteration step}

With the energy-decay estimate (Proposition~\ref{blowup}) in hand, we can now prove Theorem~\ref{regularity}. The iteration step is well known, see, for instance,~\cite[p. 185]{Chen98}. Nonetheless we present it here for the sake of completion.
\begin{proof}
Given $\gamma\in (0, 1)$, choose $\tau=(2c_0)^{2\gamma-2}$ such that:
\begin{equation}\label{eq: 238a}
2c_0\tau^{2}=\tau^{2\gamma},
\end{equation}and note that $\tau\in (0, 1)$.

Now it follows from Proposition~\ref{blowup} that there exists an $\varepsilon_0>0$ and $r_0>0$ such that whenever we have
\begin{equation}\label{eq: 238b}
\Phi(x_0, r)< \varepsilon_0^2
\end{equation}for some $x_0\in\Omega$ and some $r\in (0, \text{min}\{r_0, \text{dist }(x_0, \partial\Omega)\})$, then we have 
\begin{equation}\label{eq: 238c}
\Phi(x_0, \tau r)\leq 2c_0\tau^2\Phi(x_0, r).
\end{equation}As a consequence of~\eqref{eq: 238a},~\eqref{eq: 238b} and the fact that $\tau<1$, we arrive at the following estimate for the right-hand side of~\eqref{eq: 238c}:
\begin{equation*}
2c_0\tau^2\Phi(x_0, r)= \tau^{2\gamma}\Phi(x_0, r)<\varepsilon_0^2,
\end{equation*}and therefore, we can write~\eqref{eq: 238c} as:
\begin{equation*}
\Phi(x_0, \tau r)<\varepsilon_0^2.
\end{equation*}Thus, we can apply Proposition~\ref{blowup} again with $\tau r$ instead of $r$ in~\eqref{eq: 238b} and~\eqref{eq: 238c} to discover that
\begin{equation}\label{eq: 239}
\Phi(x_0, \tau^2 r)\leq2c_0\tau^2\Phi(x_0, \tau r).
\end{equation}We use~\eqref{eq: 238c},~\eqref{eq: 238a},~\eqref{eq: 238b} and the fact that $\tau<1$ in~\eqref{eq: 239}, to see that
\begin{equation}
2c_0\tau^2\Phi(x_0, \tau r)\leq2^2c_0^2\tau^4\Phi(x_0, r)=\tau^{4\gamma}\Phi(x_0, r)<\varepsilon_0^2
\end{equation}Consequently,~\eqref{eq: 239} can be estimated as
\begin{equation}
\Phi(x_0, \tau^2 r)<\varepsilon_0^2,
\end{equation}which allows us to use Proposition~\ref{blowup} once more. After $k$ iterations, we have
\begin{equation}\label{eq: iterlemma}
\Phi(x_0, \tau^kr)< 2^kc_0^k\tau^{2k}\Phi(x_0, r)<\tau^{2\gamma k}\varepsilon_0^2.
\end{equation}

For any $\rho\in (0, r)$, we let $k\in \mathbb{N}_0$ such that
\begin{equation}\label{eq: rhointerval}
\tau^{k+1}r\leq\rho<\tau^kr.
\end{equation}From~\eqref{eq: rhointerval} we observe that
\begin{equation}\label{eq: taumanip}
\tau^{(k+1)2\gamma}\leq \left(\frac{\rho}{r}\right)^{2\gamma}.
\end{equation}Then~\eqref{eq: iterlemma},~\eqref{eq: rhointerval} and~\eqref{eq: taumanip} imply that
\begin{align}\label{eq: gammarho}
\Phi(x_0, \rho)&=\int_{B(x_0, \rho)}|Du|^2\ \mathrm{d}x\nonumber\\
&\leq \int_{B(x_0, \tau^kr)}|Du|^2 \mathrm{d}x\nonumber\\
&=\Phi(x_0, \tau^kr)\nonumber\\
&\leq \tau^{2\gamma k}\Phi(x_0, r)\nonumber\\
&\leq \tau^{-2\gamma}\left(\frac{\rho}{r}\right)^{2\gamma}\Phi(x_0, r).
\end{align}

Therefore, if for some $r\in (0,\min\{r_0, \text{dist }(x_0, \partial\Omega)\})$ we have $\Phi(x_0, r)<\varepsilon_0^2$, then for all $\rho<r$ we have
\begin{equation*}
\Phi(x_0, \rho)\leq C\left(\frac{\rho}{r}\right)^{2\gamma}\Phi(x_0, r),
\end{equation*}where $C=C(N, K, \lambda, \gamma)$.

Note that $\Phi(x, r)$ is continuous in $x$ for each fixed $r>0$. Therefore, if $\Phi(x_0, r)<\varepsilon_0^2$, then there is some ball $B(x_0, \sigma)$ such that 
\begin{equation*}
\Phi(x, r)<\varepsilon_0^2\quad\text{for }x\in B(x_0, \sigma).
\end{equation*}Hence, for all $0<\rho<r$ we have
\begin{equation*}
\Phi(x, \rho)\leq C\left(\frac{\rho}{r}\right)^{2\gamma}\Phi(x, r),
\end{equation*}by~\eqref{eq: gammarho}, which implies that
\begin{equation*}
\int_{B(x, \rho)}|u-u_{x, \rho}|^2\ \mathrm{d}y\leq \kappa\rho^{2+2\gamma},
\end{equation*}for some $\kappa=\kappa(N, K, \lambda, \gamma, r)$. Therefore,
\begin{equation*}
u\in \mathcal{L}^{2, 2+2\gamma}_{\text{loc}}\big(B(x_0, r);\mathbb{R}^N\big)\cong C^{0, \gamma}_{\text{loc}}\big(B(x_0, r);\mathbb{R}^N\big).
\end{equation*} Now we define:
\begin{equation*}
\Omega_0\equiv\left\{x_0\in\Omega\ |\ \Phi(x_0, r)<\varepsilon_0^2 \text{ for some } r<r_0\right\}.
\end{equation*}Then
\begin{enumerate}
\item $\Omega_0\subset\Omega$ and $\Omega_0$ is open,
\item $u\in C^{0, \gamma}_{\text{loc}}(\Omega_0;\mathbb{R}^N)$ and
\item $|\Omega\setminus\Omega_0|=0$ because we have via Poincar\'e's inequality~\eqref{eq: P} that
\begin{equation*}
\Omega\setminus\Omega_{0}\subseteq \left\{ x\in\Omega\ \big|\ \liminf_{r\rightarrow 0}\int_{B(x_0, r)} |Du|^2\ \mathrm{d}x>\varepsilon_0^2\right\}=\emptyset.
\end{equation*}The singular set is empty, and therefore, $u$ is locally H\"older continuous everywhere in $\Omega$.
\end{enumerate}
\end{proof}
\section{Discussion}
Remark~\ref{hindrance} hints at the problems one will encounter in generalising Theorem~{regularity} to higher dimensions, but does not talk about the possibility of extending Theorem~\ref{regularity} to more general systems in two dimensions. What is the role of the variational structure in the proof of Theorem~\ref{regularity}? In particular, is there a fundamental problem with the proof technique presented here if one were to consider systems as in~\cite{GQ1978}:
\begin{equation}\label{eq: generalg}
-\text{div }(A^{\alpha\beta}_{ij}(x, u)D_ju^{\beta})=g^{\alpha}(x, u, Du)\quad(\alpha=1, \dots, N)?
\end{equation} In two dimensions, the answer depends crucially on the exact natural growth conditions that we impose. If we impose the strong conditions
\begin{equation}\label{eq: thaakat}
|A(x, z)\zeta|\leq K|\zeta| \quad\text{and}\quad |g(x, z, \zeta)|\leq K_0|\zeta|^2
\end{equation}for all $(x, z, \zeta)\in \overline{\Omega}\times\mathbb{R}^N\times\mathbb{R}^{2N}$ and some $K, K_0>0$, then the arguments carry forward with no trouble. In this respect, our result is true for systems of the type~\eqref{eq: generalg} that are not necessarily variational. 

However, we point out that this follows from the \emph{imposition} of~\eqref{eq: thaakat}. Returning to our variational setting, because the coefficients are smooth and bounded in $\overline{\Omega}\times\mathbb{R}^N\times\mathbb{R}^{2N}$, the principal part always satisfies the inequality
\begin{equation}\label{eq: hamesha}
|A(x, z)\zeta|\leq K|\zeta|
\end{equation}for all $(x, z, \zeta)\in \overline{\Omega}\times\mathbb{R}^N\times\mathbb{R}^{2N}$ and some $K>0$, and via Cauchy-Schwarz, the inhomogeneity in~\eqref{eq: maineq} always satisfies the inequality
\begin{equation*}
\left|\frac{1}{2}\frac{\partial A^{\gamma\beta}_{ij}}{\partial z^{\alpha}}(x, z)\zeta_j^{\beta}\zeta_i^{\gamma}\right| \leq K_0|\zeta|^2
\end{equation*}for all $(x, z, \zeta)\in \overline{\Omega}\times\mathbb{R}^N\times\mathbb{R}^{2N}$ and some $K_0>0$. Thus, the variational structure allows us to \emph{deduce} rather than \emph{impose}~\eqref{eq: thaakat} at least for the inhomogeneity.

Typically, in the literature one assumes the weaker natural growth conditions:
\begin{equation}\label{eq: kamzur}
|A(x, z)\zeta|\leq K(|\zeta|+1) \quad\text{and}\quad |g(x, z, \zeta)|\leq K_0(|\zeta|^2+1)
\end{equation}for all $(x, z, \zeta)\in \overline{\Omega}\times\mathbb{R}^N\times\mathbb{R}^{2N}$ and some $K, K_0>0$, see, for example,~\cite[p. 4]{GQ1978}. Clearly, our variational system satisfies~\eqref{eq: kamzur}, but that is inconsequential as we employ~\eqref{eq: thaakat} in the proof in any case. However, if we pass to the system~\eqref{eq: generalg} under the natural growth conditions~\eqref{eq: kamzur}, then we do run into problems. The proof follows through without any issue until we arrive at Lemma~\ref{homogsys}. The reason is simply because we do not need to work with the inhomogeneity until Lemma~\ref{homogsys}. Let us perform the steps in Lemma~\ref{homogsys}. 

Given any $B(x_m, r_m)$ and any test function $\tilde{\varphi}\in C_c^{\infty}(B(x_m, r_m);\mathbb{R}^N)$, we have that 
\begin{align*}
I&\equiv\int_{B(x_m, r_m)}A^{\alpha\beta}_{ij}(x, u(x))D_ju^{\beta}(x)D_i\tilde{\varphi}^{\alpha}(x)\ \mathrm{d}x\\
&=\int_{B(x_m, r_m)}g^{\alpha}(x, u(x), Du(x)) \tilde{\varphi}^{\alpha}(x)\ \mathrm{d}x\equiv II.
\end{align*}By rescaling from $B(x_m, r_m)$ to $B$, we find that
\begin{equation*}
I=\varepsilon_m\int_{B(0, 1)}A^{\alpha \beta}_{ij,m}(y)D_jv_m^{\beta}(y)D_i\tilde{\varphi}^{\alpha}(x_m+r_my)\ \mathrm{d}y
\end{equation*}and
\begin{equation*}
II=r_m^{2}\int_{B(0, 1)}g^{\alpha}(x_m+r_my, \varepsilon_mv_m(y)+u_{x_m, r_m}, \varepsilon_mr_m^{-1}Dv_m(y))\tilde{\varphi}^{\alpha}(x_m+r_my)\ \mathrm{d}y. 
\end{equation*}Writing $\varphi_m(y)$ for $\tilde{\varphi}(x_m +r_my)$ and equating $I$ to $II$, we see that
\begin{align}\label{eq: yeqg1}
&\int_{B(0, 1)}A^{\alpha \beta}_{ij,m}(y)D_jv_m^{\beta}(y)D_i\varphi_m^{\alpha}(y)\ \mathrm{d}y\nonumber\\
&\qquad=\varepsilon_m^{-1}r_m^{2}\int_{B(0,1)}g^{\alpha}(x_m+r_my, \varepsilon_mv_m(y)+u_{x_m, r_m}, \varepsilon_mr_m^{-1}Dv_m(y))\varphi^{\alpha}_m(y)\ \mathrm{d}y.
\end{align}

Given any $\varphi\in C_c^{\infty}(B(0, 1);\mathbb{R}^N)$, we arrive at the following estimate for the left-hand side of~\eqref{eq: yeqg1} by~\eqref{eq: kamzur} and~\eqref{eq: piii}:
\begin{align}\label{eq: bdg1}
\left|\int_{B(0, 1)}A^{\alpha \beta}_{ij,m}(y)D_jv_m^{\beta}(y)D_i\varphi^{\alpha}(y)\ \mathrm{d}y\right|&\leq (\varepsilon_mK_0\|Dv_m\|_{L^2(B)}^2+\alpha(2)K_0\varepsilon_m^{-1}r_m^2)\|\varphi\|_{\infty}\nonumber\\
&\leq(\varepsilon_mK_0+\alpha(2)K_0\varepsilon_m^{-1}r_m^2)\|\varphi\|_{\infty}.
\end{align}Consequently, as $m\rightarrow\infty$, the right-hand side of~\eqref{eq: bdg1} does not necessarily vanish in contrast to the variational setting. Without being able to show that $v$ weakly solves the linear system
\begin{equation*}
\text{div }(b^{\alpha\beta}_{ij}D_jv^{\beta})=0
\end{equation*}in $B(0, 1)$ we cannot complete the blow-up argument. We need another way to bound the left-hand side of~\eqref{eq: bdg1}. Clearly the most natural way to do this is to bound the inhomogeneities such that they vanish in the limit, and having the strong natural growth conditions~\eqref{eq: thaakat} would achieve this.


\bibliographystyle{spmpsci}      

\bibliography{refs}   

\end{document}